\documentclass[oneside,english,reqno]{amsart}

\usepackage{algorithm}
\usepackage{algorithmic}
\usepackage{multirow}
\usepackage{subfigure}
\usepackage{enumerate}

\usepackage[T1]{fontenc}
\usepackage[utf8]{inputenc}
\setcounter{tocdepth}{1}
\usepackage{babel}
\usepackage{amsmath}
\usepackage{amsthm}
\usepackage{amssymb}

\usepackage{comment}

\usepackage{graphicx}

\usepackage{graphicx}
\usepackage{tikz}
\usetikzlibrary{shapes.misc}
\usepackage{float}

\usepackage[unicode=true,pdfusetitle,
bookmarks=true,bookmarksnumbered=false,bookmarksopen=false,
breaklinks=false,pdfborder={0 0 1},backref=false,
hidelinks]
{hyperref}

\makeatletter
\numberwithin{equation}{section}
\numberwithin{figure}{section}

\makeatother

\newtheorem{theorem}{Theorem}[section]

\newtheorem{corollary}[theorem]{Corollary}

\newtheorem{definition}[theorem]{Definition}
\newtheorem{example}[theorem]{Example}

\newtheorem{lemma}{Lemma}[section]

\newtheorem{proposition}[theorem]{Proposition}
\newtheorem{remark}[theorem]{Remark}

\begin{document}

\title[Mangasarian-Fromovitz CQ and optimality conditions]{Mangasarian-Fromovitz-type constraint qualification  and optimality conditions for smooth infinite programming problems
}

	\subjclass[2010]{49J52, 49K27, 90C30 }

\keywords{Mangasarian Fromovitz CQ, Generalized Perturbed Mangasarian Fromovitz CQ, Lagrange multipliers, Hurwicz set, Nonlinear Farkas Minkowski CQ}
\author{
	Ewa M. Bednarczuk$^1$
}
\author{
	Krzysztof W. Le\'sniewski$^2$
}
\author{
	Krzysztof E. Rutkowski$^3$ 
}
\thanks{$^1$ Warsaw University of Technology, 00-662 Warszawa, Koszykowa 75,\\ Systems Research Institute of PAS, PAS, 01-447 Warsaw, Newelska 6, 		 
}
\thanks{$^2$ Systems Research Institute of the Polish Academy of Sciences,	 
}
\thanks{$^3$ Cardinal Stefan Wyszy\'nski University, 01-815 Warsaw, Dewajtis 5, 		 \href{mailto:k.rutkowski@uksw.edu.pl}{k.rutkowski@uksw.edu.pl}.}

\begin{abstract}
We introduce a constraint qualification condition (GPMFCQ) for smooth  infinite programming problems, where the nonlinear operator defining the equality constraints has nonsurjective derivative at the local minimum. The condition is a generalization of PMFCQ introduced by Morduhovich and Nghia. We prove the existence of Lagrange multipliers by using either Hurwicz set or Nonlinear Farkas Minkowski condition.
\end{abstract}
\maketitle

\section{Introduction}

In this study, we introduce new qualification condition subject to the following smooth infinite programming problem 
\begin{align}
\begin{aligned}\label{prob:mainP}
&\min_{x \in E} f(x)\\
&\text{subject to } H(x) = 0\\
&\text{and } g_t(x) \leq 0,\quad t\in T,
\end{aligned}
\end{align}
where \( f: E \to \mathbb{R} \), \( H: E \to F \), and $ g_t: E \to \mathbb{R} $, $t\in T$ are continuously Fr\'echet differentiable mappings, with \( E, F \) being Banach spaces and $T$ is arbitrary.

The feasible set \( \mathcal{F} \) is defined as:
\begin{equation}\label{set:F}
  \mathcal{F} := \{ x \in E \ | \ H(x) = 0,\ g_t(x) \leq 0,\quad t\in T \}.  
\end{equation}

Such infinite programming problems with constraint sets defined by systems of  infinite (countable) number of inequalities and equalities with continuously differentiable functions defined on Banach spaces   were considered in  \cite{bednarczuk2023constraint}, where Schauder basis are used for representation of $\mathcal{F}$. Another approach is given by \cite{MR4159570}, where authors introduces new constraint qualifications based on continuity properties of the multi-valued mapping $\mathcal{M}(\cdot,\cdot)$ related to asymptotic KKT conditions. In \cite{MR3070104} Perturbed Mangasarian-Fromovitz Constraint Qualification (PMFCQ) together with Nonlinear Farkas Minkowski Constraint Qualification (NMFCQ) condition are introduced, which allow to prove the existence of Lagrange multipliers. An important requirement of PMFCQ is that the derivative $DH(\bar{x})$ of the operator $H$ at the local minimizer $\bar{x}$ is surjective.

The main goal of our paper to prove the existence of Lagrange multipliers in the situations where the derivative of mapping defining the equality constraints, \( DH(\bar{x}) \), does not need to be surjective. This generalization allows for a broader class of problems where the image of \( DH(\bar{x}) \) might not cover the entire codomain, making the analysis more intricate and the application more general. Additionally, the number of inequality constraints \( g_t(x) \leq 0 \), $t\in T$ can be arbitrary, including uncountable sets $T$. 

Our focus is on the generalization of the Mangasarian-Fromovitz Constraint Qualification (MFCQ) to scenarios where the derivative of the mapping that defines equality constraints is not necessarily surjective. This extension is particularly relevant for problems where the image of the derivative does not span the entire codomain, leading to a more nuanced analysis of feasible solutions. Additionally, we address scenarios with infinitely many inequality constraints, a situation commonly encountered in applications involving partial differential equations (PDEs) and other functional analysis contexts.

By introducing in Section \ref{section:gpfcq} Generalized Perturbed Mangasarian-Fromovitz Constraint Qualification (GPMFCQ),
 we provide a framework that accommodates these complexities. This new condition allows for the existence of Lagrange multipliers even when traditional assumptions are not met, thus offering a more flexible approach to deriving optimality conditions. GPMFCQ is a generalization of PMFCQ to the case when \( DH(\bar{x}) \) is not surjective at local minimizer $\bar{x}$ to problem \eqref{prob:mainP}. To the best of our knowledge, this is the first instance of such a constraint qualification being applied to both equality and inequality constraints in infinite-dimensional settings (for problems with only equality constraints see \cite{MR4104521}).

We base our results on \cite{MR4104521} for the mapping \( H \) and on \cite{bs2006variational} for the inequalities $g_t$, $t\in T$. 

In  \cite{MR4104521} author mainly uses the rank theorem (see \cite[Theorem 2.5.15]{manifolds_tensor_vol2}), while \cite{bs2006variational} introduces the PMFCQ and NFMCQ  conditions. Below, we briefly describe these conditions:

\begin{enumerate}
    \item[(EQ)] 

\textit{ Rank Theorem}: 
The rank theorem in infinite-dimensional spaces, as utilized  in \cite{MR4104521}, ensures that under certain conditions, the tangent space to the level set defined by \( H(x) = 0 \) can be described explicitly. The theorem assumes that the kernel of the derivative \( DH(x) \) is topologically complemented in \( X \), and the image of \( DH(x) \) is closed and topologically complemented in \( F \).
\item[(IQ)]
\textit{ PMFCQ and NFMCQ Conditions from \cite{bs2006variational}}:
\begin{enumerate}
\item \textit{ PMFCQ (Perturbed Mangasarian-Fromovitz Constraint Qualification)}; This condition assume that at point $\bar{x}\in E$ there exist a direction $\tilde{x}\in E$
from the kernel of derivative of equality constraint, 
for which all the constraints $\langle Dg_t(\bar{x}) , \tilde{x} \rangle$ for $t\in T_\varepsilon$ are  uniformly inactive.
\item \textit{ NFMCQ (Nonlinear Farkas-Minkowski  Constraint Qualification)}.
This condition assume that at point $\bar{x}\in E$ the cone generated by pairs $(\nabla g_t(\bar{x}), g_t(\bar{x}) )$ is weakly* closed for any $t\in T$.
\end{enumerate}
\end{enumerate}
Alternatively, (IQ) are investigated in Section \ref{section:Abadie} via the following conditions:
\begin{enumerate}
\item \textit{ GPMFCQ (Generalized Perturbed Mangasarian-Fromovitz Constraint Qualification, see Definition \ref{def:GPFCQ})};
\item \textit{ Weak*-closedness of Hurwicz set (see definition of set $\mathcal{M}(\bar{x},0)$ in Section 8 of \cite{bednarczuk2023constraint})}.
\end{enumerate}

The subsequent sections will delve into the theoretical underpinnings, leveraging advanced mathematical concepts like the Abadie constraint qualification and the infinite-dimensional rank theorem, to establish optimality conditions and solution existence for this extended problem framework.

\subsection{Our aims}

The primary aim of this study is to develop a comprehensive framework for addressing smooth infinite programming problems, those involving both equality and infinite number of inequality constraints. The specific objectives include:

\begin{enumerate}
    \item Extension of Classical Constraint Qualifications: To generalize the Perturbed Mangasarian-Fromovitz Constraint Qualification (PMFCQ) by introducing the Generalized Perturbed Mangasarian-Fromovitz Constraint Qualification (GPMFCQ). This new condition is designed to be applicable even when the derivative of the equality constraint mapping is not surjective, thus broadening the scope of problems that can be effectively analyzed.
\item 
Analysis of Infinite Inequality Constraints: To address the challenges posed by optimization problems that feature infinitely many inequality constraints. This aspect is particularly relevant for applications in fields such as control theory, optimal transport, and mathematical models governed by partial differential equations (PDEs), where the set of constraints can naturally become uncountably infinite.
\item 
Derivation of Optimality Conditions: To establish necessary optimality conditions for infinite programming problems under the newly proposed GPMFCQ. These conditions aim to ensure the existence of Lagrange multipliers, facilitating the analysis and solution of problems where standard constraint qualifications fail to provide meaningful results.
\end{enumerate}

\section{Preliminaries}

Let $\mathcal{F}:= \{ x \in E \mid H(x)=0,\ g_t(x)\leq 0, t\in T\}$, where $H:\ E\rightarrow F$, $g_t:\ E\rightarrow \mathbb{R}$, $t\in T$ are defined as in Introduction. The existence feasible solutions to \eqref{prob:mainP} requires a delicate balance between the regularity conditions and the structure of the infinite constraint set.

\begin{definition}
For a given, possibly nonconvex, set $Q\subset E$ and $\bar{x}\in Q$  the {\em tangent (Bouligand) cone} to $Q$ at  $\bar{x}\in Q$ is  defined as
\begin{equation} 
\label{eq_1} 
\mathcal{T}_{Q}(\bar{x}):=\{d\in E\ |\ \exists\ \{x_{k}\}\subset Q\,\ \{t_{k}\}\subset \mathbb{R}\,\ x_{k}\rightarrow x,\, t_{k}\downarrow 0,\ (x_{k}-\bar{x})/t_{k}\rightarrow d\},
\end{equation}
and the weak tangent cone to $Q$ at  $\bar{x}\in Q$ is defined as
\begin{equation*}
    \mathcal{T}_{Q}^w(\bar{x}):=\{d\in E\ |\ \exists\ \{x_{k}\}\subset Q\,\ \{t_{k}\}\subset \mathbb{R}\,\ x_{k}\rightarrow \bar{x},\, t_{k}\downarrow 0,\ (x_{k}-\bar{x})/t_{k}\rightharpoonup d\}.
\end{equation*}

Equivalently,  an element $d\in E$ belongs to the set $\mathcal{T}_{Q}(\bar{x})$ if there exists a function $r:\ [0,+\infty) \rightarrow E$, $\lim_{t\rightarrow 0^+} \frac{r(t)}{t}=0$ and $\varepsilon_0>0$ such that for all $t\in [0,\varepsilon_0)$ we have $\bar{x}+td+r(t)\in Q$.
\end{definition}

\begin{definition}
    For any $\bar{x}\in \mathcal{F}$, where $\mathcal{F}$ is given by   
    
    \begin{equation}
    \mathcal{F}:=\{x\in E\ \mid\ H(x)=0, \ \ g_{t}(x)\le 0\ \ t\in T\}.    
    \end{equation}
    $T(\bar{x})$ denotes the set of  active  (inequality) indices of $\mathcal{F}$ at $\bar{x}$, 
    \begin{equation*}
        T(\bar{x}):=\{ t\in T \mid g_t(\bar{x})=0 \}.
    \end{equation*}
 \end{definition}

\begin{definition}
The cone
\begin{equation} 
\label{eq_2}
\Gamma_\mathcal{F}(\bar{x}):=\{d\in E\ \mid DH(\bar{x})d=0,
 \langle Dg_{t}(\bar{x}),d\rangle\le 0,\ t\in T(\bar{x})\}
\end{equation}
is called the {\em linearized cone to $\mathcal{F}$ } at $\bar{x}$.
\end{definition}

In this paper we assume that at given point $\bar{x}\in E$,
\begin{equation}\label{assumption:aff}
    \operatorname{aff} \Gamma_{\mathcal{F}}(\bar{x}) = \{ d \in E \mid DH(\bar{x})d=0 \},
\end{equation} meaning that there is no $t\in T(\bar{x})$ such that 
$Dg_t(\bar{x})d=0$ for any $d\in \Gamma_{\mathcal{F}}(\bar{x})$. Let us note that it is not a restrictive assumption, it is made here to simplify the presentation.

\begin{definition}
Given a set $\Omega\subset X$ we define the Fr\'echet normal
cone and limiting/Mordukhovich normal cone to $\Omega$ at $\bar{x}\in \Omega$ by, respectively
\begin{align*}
    &\hat{N}_\Omega(\bar{x})= \{ v\in X^*\mid \forall x \in \Omega: \ \langle v,x-\bar{x}\rangle_X\leq o(\|x-\bar{x}\|_X),\\
    &N_\Omega(\bar{x})= \{ v\in X^*\mid \exists \{x_k\}\in \Omega\ \exists
    \{v_k\}\in X^*:\ x_k\rightarrow \bar{x},\ v_k\rightharpoonup v,\ v_k \in  \hat{N}_\Omega(x_k)\ \forall k\in \mathbb{N} \}.
\end{align*}
\end{definition}
\begin{corollary}(\cite[Corollary 1.11]{MR3070104})
Let $X$ be a reflexive Banach space, and let $\Omega\subset X$ with $x\in \Omega$. Then the Fr\'echet normal cone to $\Omega$ at $\bar{x}$, $\hat{N}_{\Omega} (\bar{x})$, is dual to the weak contingent cone to $\Omega$ at this point, $ \mathcal{T}_\Omega^w(\bar{x})$, i.e.,
\begin{equation*}
    \hat{N}_{\Omega} (\bar{x})= (\mathcal{T}_\Omega^w)^\circ(\bar{x}):= \{ x^* \in X^* \mid \langle x^*,z \rangle \leq 0, \ \text{whenever } v\in \mathcal{T}_\Omega^w(\bar{x}) \}.
\end{equation*}
Thus one has the duality relationship
\begin{equation*}
    \hat{N}_\Omega(\bar{x})=\mathcal{T}_\Omega^\circ(\bar{x})
\end{equation*}
when $X$ if finite dimensional.
\end{corollary}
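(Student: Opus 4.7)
The plan is to establish the equality $\hat{N}_{\Omega}(\bar{x}) = (\mathcal{T}_\Omega^w)^\circ(\bar{x})$ by proving both inclusions, with reflexivity of $X$ playing a decisive role only in the nontrivial direction.

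For the inclusion $\hat{N}_{\Omega}(\bar{x}) \subseteq (\mathcal{T}_\Omega^w)^\circ(\bar{x})$, I would pick $v \in \hat{N}_\Omega(\bar{x})$ and $d \in \mathcal{T}_\Omega^w(\bar{x})$. By definition of the weak contingent cone there exist $x_k \to \bar{x}$ in $\Omega$ and $t_k \downarrow 0$ with $(x_k-\bar{x})/t_k \rightharpoonup d$. Weakly convergent sequences are norm-bounded, so $\|x_k-\bar{x}\|/t_k \leq M$. Using the Fr\'echet normal inequality $\langle v, x_k-\bar{x}\rangle \leq o(\|x_k-\bar{x}\|)$ and dividing by $t_k$, the right-hand side is $\bigl(o(\|x_k-\bar{x}\|)/\|x_k-\bar{x}\|\bigr)\cdot \|x_k-\bar{x}\|/t_k \to 0$. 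Weak convergence gives $\langle v, d\rangle = \lim_k \langle v, (x_k-\bar{x})/t_k\rangle \leq 0$, hence $v \in (\mathcal{T}_\Omega^w)^\circ(\bar{x})$.

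For the reverse inclusion I would argue by contradiction: suppose $v \in (\mathcal{T}_\Omega^w)^\circ(\bar{x})$ but $v \notin \hat{N}_\Omega(\bar{x})$. The negation of the Fr\'echet normal condition produces $\varepsilon > 0$ and a sequence $x_k \to \bar{x}$ in $\Omega$ with $x_k \neq \bar{x}$ such that $\langle v, x_k - \bar{x}\rangle / \|x_k-\bar{x}\| > \varepsilon$ for all $k$. Setting $t_k := \|x_k - \bar{x}\|$, the vectors $d_k := (x_k - \bar{x})/t_k$ all lie on the unit sphere. Here reflexivity of $X$ enters: the closed unit ball is weakly sequentially compact (Eberlein--\v{S}mulian), so a subsequence satisfies $d_k \rightharpoonup d$ for some $d \in X$. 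By definition $d \in \mathcal{T}_\Omega^w(\bar{x})$, and passing to the limit in the scalar inequality yields $\langle v, d\rangle \geq \varepsilon > 0$, contradicting $v \in (\mathcal{T}_\Omega^w)^\circ(\bar{x})$.

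The finite-dimensional postscript is immediate: when $X$ is finite-dimensional, weak and strong convergence coincide, so $\mathcal{T}_\Omega^w(\bar{x}) = \mathcal{T}_\Omega(\bar{x})$ and the identity $\hat{N}_\Omega(\bar{x}) = \mathcal{T}_\Omega^\circ(\bar{x})$ follows from the already-proved equality. The main obstacle is the second inclusion, and specifically the legitimacy of extracting a weak limit from the normalized difference quotients; this is exactly where the reflexivity hypothesis cannot be dropped, since in a non-reflexive space the bounded sequence $\{d_k\}$ need not admit any weakly convergent subsequence.
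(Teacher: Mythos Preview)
Your argument is correct and is the standard proof of this duality. Note, however, that the paper does not supply its own proof of this corollary: it is quoted verbatim as a known result from the cited reference \cite[Corollary~1.11]{MR3070104} and is used without further justification. So there is nothing to compare against beyond observing that your two-inclusion argument, with Eberlein--\v{S}mulian furnishing the weak subsequential limit in the nontrivial direction, is exactly the expected route and would serve as a self-contained replacement for the citation.
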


\begin{theorem}(Lyusternik, see \cite[Section 0.2.4]{theory_of_external_problems_Ioffe} )\label{theorem:Ljusternik}
	Let $X$ and $Y$ be Banach spaces, let $U$ be a
	neighborhood of a point $x_0\in X$, and let $f:\ U \rightarrow Y$ be a Fr\'echet differentiable mapping. Assume that $f$ is regular at $x_0$, i.e., that
	$\text{Im}\, Df(x_0)= Y$,
	and that its derivative is continuous at this point (in the uniform operator
    	topology of the space $\Gamma(X, Y)$). Then the tangent space  $\mathcal{ T}_M(x_0)$ to the set
	\begin{equation*}
		M =
		\{x \in U \mid f(x) =
		f(x_0)\}
	\end{equation*}
	at the point $x_0$ coincides with the kernel of the operator $Df(x_0)$,
	\begin{equation}
		\label{lust1}
		\mathcal{T}_M(x_0) =
		\ker\, Df(x_0).
	\end{equation}
	Moreover, if the assumptions of the theorem are satisfied, then there exist a
	neighbourhood $U'\subset U$ of the point $x_0$, a number $K>0$, and a mapping
	$\xi \rightarrow x (\xi)$ of the set $U'$ into $X$ such that
	\begin{align}	\label{lust2}
		\begin{aligned}
			& f(\xi + x(\xi))= f(x_0),\\
			& \|x(\xi)\|\leq K \|f(\xi) - f(x_0)\|
		\end{aligned}
	\end{align}
	for all $\xi \in U'$.
	
\end{theorem}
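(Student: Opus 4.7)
The plan is to prove the existence of the correcting map $\xi \mapsto x(\xi)$ in \eqref{lust2} first, and then to deduce the characterization \eqref{lust1} of the tangent space as a corollary, since the deep content of the theorem lies in the quantitative estimate while the tangent-space identity is essentially formal.

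The easy inclusion $\mathcal{T}_M(x_0) \subseteq \ker Df(x_0)$ follows directly from Fréchet differentiability: given $d \in \mathcal{T}_M(x_0)$ with witnesses $x_k \in M$ and $t_k \downarrow 0$ satisfying $(x_k - x_0)/t_k \to d$, one has
\[
0 = \frac{f(x_k) - f(x_0)}{t_k} = Df(x_0)\frac{x_k - x_0}{t_k} + \frac{o(\|x_k - x_0\|)}{t_k} \longrightarrow Df(x_0)d.
\]

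For the correcting map I would run a modified Newton iteration. The Banach open mapping theorem applied to the surjection $Df(x_0) : X \to Y$ furnishes a constant $C > 0$ such that every $y \in Y$ admits a preimage $v$ with $\|v\| \leq C\|y\|$. Fix $\theta \in (0,1)$ and, using the norm-continuity of $Df$ at $x_0$, a radius $\rho > 0$ such that $\|Df(x) - Df(x_0)\| \leq \theta/C$ whenever $\|x - x_0\| \leq \rho$. For $\xi$ in a sufficiently small neighborhood $U'$ of $x_0$, set $u_0 = 0$ and inductively pick $v_n \in X$ with $Df(x_0)v_n = f(x_0) - f(\xi + u_n)$ and $\|v_n\| \leq C\|f(\xi + u_n) - f(x_0)\|$, then put $u_{n+1} = u_n + v_n$. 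Using the identity
\[
f(\xi + u_{n+1}) - f(x_0) = \bigl[f(\xi + u_n + v_n) - f(\xi + u_n) - Df(x_0)v_n\bigr]
\]
together with the mean value inequality applied to $f - Df(x_0)\cdot$, one obtains $\|v_{n+1}\| \leq \theta \|v_n\|$, so the series $\sum v_n$ converges geometrically, its sum $x(\xi)$ satisfies $f(\xi + x(\xi)) = f(x_0)$ and $\|x(\xi)\| \leq \frac{C}{1-\theta}\|f(\xi) - f(x_0)\|$, yielding \eqref{lust2} with $K = C/(1-\theta)$.

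Finally, the nontrivial inclusion $\ker Df(x_0) \subseteq \mathcal{T}_M(x_0)$ follows by applying the previous step to $\xi(t) = x_0 + td$ for $d \in \ker Df(x_0)$ and small $t > 0$: Fréchet differentiability together with $Df(x_0)d = 0$ gives $\|f(\xi(t)) - f(x_0)\| = o(t)$, so $r(t) := x(\xi(t))$ satisfies $\|r(t)\| = o(t)$ and $x_0 + td + r(t) \in M$, placing $d$ in $\mathcal{T}_M(x_0)$. The main obstacle is making the Newton iteration contractive: one must balance the open mapping constant $C$ against the modulus of continuity of $Df$ at $x_0$, and simultaneously keep the iterates $\xi + u_n$ inside the ball of radius $\rho$ on which those estimates are valid, which forces $U'$ to be chosen small enough to control both the starting perturbation and the total travel $\sum\|v_n\|$.
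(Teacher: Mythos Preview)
The paper does not prove this theorem; it is quoted as a classical result with a reference to Ioffe's book on extremal problems, so there is no ``paper's own proof'' to compare against.

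Your argument is the standard Lyusternik--Graves modified Newton iteration and is correct in outline. One point worth tightening: the hypothesis only gives continuity of $Df$ at the single point $x_0$ (not $C^1$ on a neighbourhood), so the mean value step must be applied to $g(x)=f(x)-Df(x_0)x$ on the segment $[\xi+u_n,\,\xi+u_n+v_n]$, and the bound $\sup\|Dg\|\le\theta/C$ is only available while that entire segment lies in $B(x_0,\rho)$. You flag this at the end, and it is easily handled by an a priori choice of $U'$: if $\|\xi-x_0\|\le\delta$ with $\delta+\tfrac{C}{1-\theta}\sup_{\|\zeta-x_0\|\le\delta}\|f(\zeta)-f(x_0)\|\le\rho$, an induction shows all iterates stay in $B(x_0,\rho)$ and the geometric estimate $\|v_{n+1}\|\le\theta\|v_n\|$ is self-sustaining. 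With that bookkeeping made explicit, your proof is complete and is exactly the argument one finds in Ioffe--Tikhomirov.
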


\begin{theorem}\label{theorem:local_representation}(\cite[Theorem 2.5.14]{manifolds_tensor_vol2} Local Representation Theorem)
	Let $E,\ Y$ be 
	Banach spaces. 	Let $f:\ U \rightarrow Y$ be of class $C^r$, $r\geq1$ in a neighbourhood of $x_0\in U$, $U\subset E$ open set. Let $Y_1$ be a closed split image of $Df(x_0)$ with closed complement $Y_2$.
	Suppose that  $Df(x_0)$ has a split kernel $E_2=\ker Df(x_0)$ with closed complement $E_1$. 
	Then there are
	open sets $U_1\subset U \subset E_1\oplus E_2 $ and $U_2 \subset Y_1 \oplus E_2$, $x_0\in U_2$  and a $C^r$ diffeomorphism 
	$\psi:\ U_2\rightarrow U_1 $ such that $(f\circ \psi)(u,v)=(u,\eta(u,v))$ for any  $(u,v)\in U_1$, where 
	$u\in E_1$, $v\in E_2$ and  $\eta:\ U_2\rightarrow E_2$ is a $C^r$ map satisfying $D\eta(\psi^{-1}(x_0))=0$.
\end{theorem}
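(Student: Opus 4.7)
The plan is to reduce the statement to a single application of the Banach-space inverse function theorem by engineering an auxiliary map whose derivative at $x_0$ is an isomorphism. First I would use the splittings $E = E_1 \oplus E_2$ and $Y = Y_1 \oplus Y_2$ to write every point of $E$ as $(u,v)$ with $u \in E_1$, $v \in E_2 = \ker Df(x_0)$, and to decompose $f = (f_1, f_2)$ with $f_i$ taking values in $Y_i$. The central linear observation is that $A := Df(x_0)|_{E_1}$ maps $E_1$ bijectively onto $Y_1 = \text{Im}\, Df(x_0)$: injectivity follows because $\ker Df(x_0) = E_2$ meets $E_1$ only at the origin, while surjectivity follows because $Df(x_0)$ annihilates $E_2$, so $E_1$ already exhausts the image. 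The open mapping theorem then promotes $A$ to a topological isomorphism between Banach spaces.

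Next I would define the auxiliary map $\Phi: U \to Y_1 \oplus E_2$ by $\Phi(u,v) := (f_1(u,v),\, v)$. Differentiating at $x_0$ yields $D\Phi(x_0)(h_1, h_2) = (A h_1,\, h_2)$, because the $h_2$-contribution in the first coordinate vanishes ($Df(x_0)$ kills $E_2$). Hence $D\Phi(x_0)$ is a topological isomorphism between $E_1 \oplus E_2$ and $Y_1 \oplus E_2$. The Banach-space inverse function theorem applied to $\Phi$ then produces open neighbourhoods $U_1 \subset U$ of $x_0$ and $U_2 \subset Y_1 \oplus E_2$ of $\Phi(x_0)$, together with a $C^r$ diffeomorphism $\psi := \Phi^{-1}: U_2 \to U_1$.

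With $\psi$ constructed, the normal form can be read off directly. Since $\Phi \circ \psi$ is the identity, the $E_2$-component of $\psi(u,v)$ must equal $v$, so $\psi(u,v) = (\varphi(u,v),\, v)$ for some $C^r$ map $\varphi$; moreover $f_1(\psi(u,v)) = u$ by the very definition of $\Phi$. Setting $\eta(u,v) := f_2(\psi(u,v))$ gives the advertised identity $(f \circ \psi)(u,v) = (u, \eta(u,v))$. Finally, the chain rule produces $D\eta(\psi^{-1}(x_0)) = Df_2(x_0) \circ D\psi(\psi^{-1}(x_0))$, which vanishes because $Df_2(x_0)$ is the $Y_2$-component of $Df(x_0)$ and the image of $Df(x_0)$ lies entirely in $Y_1$.

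The essentially only nontrivial step is the assertion that $A$ is a topological isomorphism, which in turn guarantees that $D\Phi(x_0)$ admits a bounded inverse. This is where the splitting hypotheses — that $E_1$ is a closed complement of $\ker Df(x_0)$ in $E$ and that $Y_1$ is closed in $Y$ — enter decisively: without them the open mapping theorem is unavailable, and the inverse function theorem cannot be invoked. Once this boundedness is in place, everything else is a bookkeeping exercise with the chain rule.
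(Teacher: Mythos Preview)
The paper does not supply its own proof of this theorem: it is quoted verbatim from Abraham--Marsden--Ratiu \cite[Theorem 2.5.14]{manifolds_tensor_vol2} and used as a black box in the proof of Theorem~\ref{theorem:cone_representation}. Your argument is correct and is, in fact, precisely the proof given in that reference: build the auxiliary map $\Phi(u,v)=(f_1(u,v),v)$, observe that $D\Phi(x_0)$ is a Banach isomorphism because $Df(x_0)|_{E_1}\colon E_1\to Y_1$ is one (open mapping theorem, using closedness of $E_1$ and $Y_1$), invert $\Phi$ locally, and read off $\eta=f_2\circ\psi$ with $D\eta(\psi^{-1}(x_0))=0$ from $Df_2(x_0)=0$. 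There is nothing to compare against and no gap to report; the only remark worth making is that the statement as printed contains two evident typos (one should have $x_0\in U_1$ rather than $U_2$, and $\eta$ should take values in $Y_2$ rather than $E_2$), and you have silently and correctly resolved both.
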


\begin{theorem}(Rank Theorem, see \cite[Theorem 2.5.15]{manifolds_tensor_vol2} )\label{theorem:rank}
	
	Let $E,\ Y$ be
	Banach spaces. 
	Let $x_0\in U$, where $U$ is an open subset of $E$ and $f:\ U\rightarrow Y$ be of class $C^1.$ 
	
	Assume that $Df(x_0)$ has a closed split image $Y_1$ with closed component $Y_2$ and a split kernel $E_2$ with closed component $E_1$ and that for all $x$ in some neighbourhood of $x_0$, $Df(x)|E_1:\ E_1 \rightarrow Df(x)(E)$ is an isomorphism.

	Then there exist open sets $U_1\subset Y_1\oplus E_2$, $U_2\subset E$, $V_1\subset Y$, $V_2\subset Y_1\oplus E_2$ and diffeomorphisms of class $C^1$, $\varphi:\ V_1\rightarrow V_2$ and $\psi:\ U_1\rightarrow U_2$,  $x_0=(x_{01},x_{02})\in U_2\subset U\subset E_1\oplus E_2$, i.e. $x_{01}\in E_1$, $x_{02}\in E_2$, $f(x_0)\in V_1$ satisfying 
	\begin{equation*}
		(\varphi \circ f \circ \psi )(w,e)=(w,0),\quad \text{where}\ w\in Y_1,\ e\in E_2
	\end{equation*} for all $(w,e)\in U_1$.
\end{theorem}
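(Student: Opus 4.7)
My plan is to apply the Local Representation Theorem (Theorem~\ref{theorem:local_representation}) to reduce $f$ to a partial normal form $(u, v) \mapsto (u, \eta(u, v))$, then use the constant-rank hypothesis to kill the dependence of $\eta$ on the $E_2$-variable $v$, and finally introduce a shear diffeomorphism in the target to straighten the map completely.

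First, Theorem~\ref{theorem:local_representation} produces open neighborhoods $\widetilde U_1 \subset Y_1 \oplus E_2$ of some $(u_0, v_0)$ and $\widetilde U_2 \subset U$ of $x_0$ together with a $C^1$ diffeomorphism $\psi : \widetilde U_1 \to \widetilde U_2$ such that
\begin{equation*}
    (f \circ \psi)(u, v) = \bigl(u, \eta(u, v)\bigr),
\end{equation*}
for some $C^1$ map $\eta$ with $D\eta(u_0, v_0) = 0$. The standard construction of $\psi$, via the inverse function theorem applied to $(e_1, e_2) \mapsto (P_{Y_1} f(e_1, e_2), e_2)$ (with $P_{Y_1}: Y \to Y_1$ the projection associated to the splitting $Y = Y_1 \oplus Y_2$), yields $\psi$ in the explicit form $\psi(u, v) = (\alpha(u, v), v)$, with $D_u \alpha(u_0, v_0)$ equal to the inverse of $Df(x_0)|_{E_1}$. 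Consequently, shrinking $\widetilde U_1$ if necessary, $D_u\alpha(u, v): Y_1 \to E_1$ is an isomorphism throughout, and therefore $D\psi(u, v)(Y_1 \oplus \{0\}) = E_1 \oplus \{0\}$ everywhere on $\widetilde U_1$.

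Second, I would use the constant-rank hypothesis to prove $D_v \eta \equiv 0$ on $\widetilde U_1$. Setting $g := f \circ \psi$ and using the chain rule together with the previous observation,
\begin{equation*}
    \operatorname{Image} Dg(u, v) = Df\bigl(\psi(u, v)\bigr)(E) = Df\bigl(\psi(u, v)\bigr)(E_1) = \operatorname{Image}\bigl(Dg(u, v)|_{Y_1 \oplus \{0\}}\bigr),
\end{equation*}
where the middle equality is precisely the constant-rank hypothesis. On the other hand, a direct computation from $g(u, v) = (u, \eta(u, v))$ shows
\begin{equation*}
    Dg(u, v)(h, k) = \bigl(h, D_u\eta(u, v) h + D_v\eta(u, v) k\bigr),
\end{equation*}
so $\operatorname{Image}(Dg(u, v)|_{Y_1 \oplus \{0\}})$ is the graph of $D_u\eta(u, v)$ inside $Y_1 \oplus Y_2$. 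Forcing the element $(0, D_v\eta(u, v) k)$ to lie in this graph for every $k \in E_2$ yields $D_v\eta(u, v) k = 0$, hence $D_v \eta \equiv 0$ on $\widetilde U_1$. Possibly shrinking once more to a product neighborhood $U' \times V'$ with $V'$ convex, we conclude $\eta(u, v) = \tilde\eta(u) := \eta(u, v_0)$.

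Third, define $\varphi: V_1 \to V_2$ by the shear $\varphi(a, b) := (a, b - \tilde\eta(a))$ on a suitable neighborhood $V_1$ of $f(x_0)$ in $Y = Y_1 \oplus Y_2$, whose inverse is $(a, c) \mapsto (a, c + \tilde\eta(a))$. After one last shrinking of domains so that every composition is well defined, one obtains
\begin{equation*}
    (\varphi \circ f \circ \psi)(u, v) = \varphi\bigl(u, \tilde\eta(u)\bigr) = (u, 0),
\end{equation*}
which is the required normal form. The main obstacle is the second step: in finite dimensions the vanishing $D_v\eta \equiv 0$ is a one-line rank count, but in the Banach setting mere isomorphism of $\operatorname{Image} Dg(u,v)$ with $E_1$ does not rule out \emph{extra} directions in $D_v\eta(u, v)(E_2)$. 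To rule them out one must establish the genuine set-theoretic equality between $\operatorname{Image} Dg(u, v)$ and $\operatorname{Image}(Dg(u, v)|_{Y_1 \oplus \{0\}})$, which in turn relies on combining the explicit form $\psi(u, v) = (\alpha(u, v), v)$ read off from the proof of Theorem~\ref{theorem:local_representation} with the pointwise hypothesis $Df(x)(E) = Df(x)(E_1)$.
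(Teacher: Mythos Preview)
The paper does not supply its own proof of this theorem; it merely quotes the statement from \cite[Theorem 2.5.15]{manifolds_tensor_vol2} and uses it as a black box in the proof of Theorem~\ref{theorem:cone_representation}. So there is nothing in the paper to compare your argument against.

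That said, your proposal is correct and is essentially the proof given in the cited reference: invoke the Local Representation Theorem to reach the intermediate form $(u,\eta(u,v))$, exploit the constant-rank hypothesis $Df(x)(E)=Df(x)(E_1)$ together with the explicit shape $\psi(u,v)=(\alpha(u,v),v)$ to force $D_v\eta\equiv 0$, integrate over a product neighborhood to get $\eta(u,v)=\tilde\eta(u)$, and finish with the shear $\varphi(a,b)=(a,b-\tilde\eta(a))$ in the target. The one place to be explicit is the chain of equalities
\begin{equation*}
\operatorname{Image} Dg(u,v)=Df(\psi(u,v))(E)=Df(\psi(u,v))(E_1)=\operatorname{Image}\bigl(Dg(u,v)|_{Y_1\oplus\{0\}}\bigr):
\end{equation*}
the outer equalities use that $D\psi(u,v)$ is a bijection of $Y_1\oplus E_2$ onto $E$ carrying $Y_1\oplus\{0\}$ onto $E_1\oplus\{0\}$, which is not part of the \emph{statement} of Theorem~\ref{theorem:local_representation} but comes from its construction. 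You flag this yourself in the closing paragraph, so the argument is complete once that structural fact about $\psi$ is recorded.
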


Let $\mathbb{R}^T=\{ (x_t)_{t\in T} \mid x_t\in \mathbb{R},\ t\in T \}  $, $\mathbb{R}_-^T=\{ (x_t)_{t\in T} \mid x_t\in \mathbb{R}_-,\ t\in T \}$ ,
$\tilde{\mathbb{R}}^T=\{ (x_t)_{t\in T} \in \mathbb{R}^T \mid x_t\neq 0 \text{ for finitely many } t\in T \}  $, $\tilde{\mathbb{R}}_-^T=\{ (x_t)_{t\in T} \in \mathbb{R}_-^T \mid x_t\neq 0 \text{ for finitely many} \ t\in T \}$, $K=\{0_F\}\times \mathbb{R}^T_-$. 

For completeness we include the description of $\tilde{\mathbb{R}}^T$ c.f. \cite[Proposition 6, page 32]{bourbaki2013general}).
\begin{lemma}\label{lemma:dual} Let $\mathbb{R}^T=\{ (x_t)_{t\in T} \mid x_t\in \mathbb{R},\ t\in T \}$ be the product space with the product topology (see \cite[$\mathsection$ 4.1]{bourbaki2013general}). The space $\tilde{\mathbb{R}}^T$ is isomorphic to the space of linear continuous functional defined on $\mathbb{R}^T$.

\end{lemma}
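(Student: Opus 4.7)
The plan is to construct the isomorphism explicitly and then use the defining base of neighborhoods of the product topology to recognize every continuous linear functional as an element of $\tilde{\mathbb{R}}^T$. Recall that a fundamental system of neighborhoods of $0$ in $\mathbb{R}^T$ is formed by the sets $V(F,\varepsilon):=\{x\in\mathbb{R}^T\mid |x_t|<\varepsilon \text{ for all } t\in F\}$, indexed by finite subsets $F\subset T$ and $\varepsilon>0$.

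First I would define the map $\Phi:\tilde{\mathbb{R}}^T\to(\mathbb{R}^T)^*$ by $\Phi(y)(x):=\sum_{t\in T}y_t x_t$; this is a finite sum because $y$ has finite support, so the formula is meaningful, linear in $x$, and continuous (boundedness on $V(\mathrm{supp}\,y,1)$ gives continuity at $0$). Injectivity is immediate from $\Phi(y)(e_t)=y_t$, where $e_t\in\mathbb{R}^T$ denotes the element equal to $1$ at coordinate $t$ and $0$ elsewhere. Linearity of $\Phi$ as a map between vector spaces is clear.

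The main step, and the one I expect to require the most care, is surjectivity, i.e.\ showing that every continuous linear $\varphi:\mathbb{R}^T\to\mathbb{R}$ has ``finite support.'' Since $\varphi$ is continuous at $0$, there exist a finite $F\subset T$ and $\varepsilon>0$ with $|\varphi(x)|<1$ for all $x\in V(F,\varepsilon)$. The crucial observation is that if $x\in\mathbb{R}^T$ satisfies $x_t=0$ for every $t\in F$, then $\lambda x\in V(F,\varepsilon)$ for \emph{every} $\lambda\in\mathbb{R}$; hence $|\lambda\varphi(x)|<1$ for all $\lambda$, which forces $\varphi(x)=0$. In particular $\varphi(e_t)=0$ whenever $t\notin F$. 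Setting $y_t:=\varphi(e_t)$ for $t\in F$ and $y_t:=0$ otherwise, one gets $y\in\tilde{\mathbb{R}}^T$, and for an arbitrary $x\in\mathbb{R}^T$ the decomposition $x=\sum_{t\in F}x_t e_t+x'$ (with $x'_t=0$ on $F$) together with linearity yields $\varphi(x)=\sum_{t\in F}x_t y_t=\Phi(y)(x)$.

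The main obstacle is precisely the ``scaling'' argument showing that any continuous $\varphi$ must vanish outside finitely many coordinates; this relies on the specific structure of the neighborhood base of the product topology, namely that restrictions are imposed only on finitely many coordinates at a time. Once this is established, the routine bookkeeping of checking linearity, well-definedness, and the pairing formula completes the identification $\tilde{\mathbb{R}}^T\cong(\mathbb{R}^T)^*$.
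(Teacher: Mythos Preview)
Your argument is correct and is the standard elementary proof of this fact: the key point is exactly the scaling observation that any $x$ vanishing on the finite set $F$ can be absorbed into the basic neighborhood $V(F,\varepsilon)$ after multiplication by any scalar, forcing $\varphi(x)=0$. The paper itself does not supply a proof of this lemma; it simply cites \cite[Proposition~6, p.~32]{bourbaki2013general}, whose argument is essentially the one you wrote out.
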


By \cite[Proposition 1, page 44]{bourbaki2013general}, since $g_t:\ E \rightarrow \mathbb{R}$ are continuous, there exists exactly one operator $G:\ E \rightarrow  F \times \mathbb{R}^T$ such that for any $x\in E$ and for any $t\in T$ we have $g_t(x)=p_t(G(x))$ and $p_F(G(x))=H(x)$, where $p_t:\ \mathbb{R}^T\rightarrow \mathbb{R}$ is projection onto the $t$-th component of $\mathbb{R}^T$ and $p_F:\ E \times F$ is projection onto $F$ component. 
Let $\bar{x}\in E$. Analogously, if $Dg_t(\bar{x}):\ E \rightarrow \mathbb{R}$ are continuous, then there exists exactly one operator $A:\ E \rightarrow F\times \mathbb{R}^T$ such that for any  for any $t\in T$ we have $Dg_t(\bar{x})=p_t(A(\bar{x}))$ and $DH(\bar{x})=p_F(A(\bar{x}))$. 
In this case we will denote $A(\bar{x})$ as $DG(\bar{x})$. 
If operator $G$ is differentiable at $\bar{x}$, then there exists $DG(\bar{x}):\ E \rightarrow F$
\begin{equation*}
    0=\lim_{h\rightarrow 0} \frac{G(\bar{x}+h)-G(\bar{x})-DG(\bar{x})h}{\|h\|}
\end{equation*}
and since
\begin{equation*}
    0=\lim_{h\rightarrow 0} \frac{(H(\bar{x}+h),(g_t(\bar{x}+h))_{t\in T})-(H(\bar{x}),(g_t(\bar{x}))_{t\in T})-A(\bar{x})h}{\|h\|},
\end{equation*}
$p_t(DG(\bar{x}))=Dg_t(\bar{x})$, $t\in T$, $p_F(DG(\bar{x}))=DH(\bar{x})$, 
by uniqueness of the operator $A(\bar{x})$ we have $A(\bar{x})=DG(\bar{x})$ and $DG(\bar{x})$ is continuous.

The following definition was introduced in \cite{MR4159570}.
\begin{definition}
A feasible point $\bar{x}\in \mathcal{F}$ of \eqref{prob:mainP} is called KKT point if there exists $\bar{\lambda}\in F^*\times \tilde{R}^T$ such that
\begin{equation*}
    0=Df(\bar{x})+DG(\bar{x})^*\bar{\lambda},\footnote{Here $DG(\bar{x})^*$ is an adjoint operator to $DG(\bar{x})$.}\quad \text{and}\quad \bar{\lambda} \in \hat{N}_K(G(\bar{x})).
\end{equation*}
\end{definition}

The following definition was introduced in \cite{MR3070104}.
\begin{definition}
(nonlinear Farkas-Minkowski constraint qualification) We say that system
\eqref{set:F}
satisfies the Nonlinear Farkas-Minkowski constraint
qualification (NFMCQ) at $\bar{x}$ if the set
\begin{equation*}
    \operatorname{cone} \{ (\nabla g_t(\bar{x}),\langle g_t(\bar{x}),\bar{x}\rangle-g_t(\bar{x}) \mid t\in T \}
\end{equation*}
is weak*-closed in product space $X^*\times \mathbb{R}$.
\end{definition}

\section{Generalized Perturbed Mangasarian-Fromovitz Constraint Qualification (GPMFCQ)}\label{section:gpfcq}

Let $\bar{x}\in \mathcal{F}$. We consider the situation when $DH(\bar{x})$ is not onto. In this section we introduce the Generalized Perturbed Mangasarian-Fromovitz Constraint Qualification (GPMFCQ), which provides a more flexible framework for analyzing the problem \eqref{prob:mainP}:

\begin{definition}(GPMFCQ) \label{def:GPFCQ}
Let $E,F$ Banach spaces and $\bar{x}\in E$. Let $E_2=\ker DH(\bar{x})$ and $F_1=\operatorname{im} DH(\bar{x})$. Let $H=(H_1,H_2):\ E_1 \oplus E_2\rightarrow F_1\oplus F_2$, where $H_1:\ E \rightarrow F_1=DH(\bar{x})E=DH(\bar{x})E_1$. We say that \textit{Generalized Perturbed Mangasarian Fromovitz Constraint Qualification} holds at $\bar{x}\in E$ if the following holds:
\begin{enumerate}
    \item[(EQ)]\label{gpmfcq:EQ} $E=E_1 \oplus E_2 $, where $E_2=\operatorname{ker} DH(\bar{x})$ is closed, $F=F_1\oplus F_2$, $F_1=\operatorname{im} DH(\bar{x})(E)$ is closed and there exists $U(\bar{x})$ such that
\begin{equation*}
    \operatorname{im} DH(x) \cap F_2 = \{ 0 \} \quad x \in U(\bar{x}),
\end{equation*}
    \item[(IQ)]\label{gpmfcq:IQ} there exists $\tilde{x}\in E$ such that $DH_1(\bar{x})(\tilde{x})=0$ and
\begin{equation*}
   \inf_{\varepsilon>0} \sup_{t\in T_\varepsilon(\bar{x})} \langle Dg_t(\bar{x}) , \tilde{x} \rangle <0 \quad \text{for all}\  t\in T_\varepsilon(\bar{x}):=\{ t \in T \mid g_t(\bar{x})\geq - \varepsilon \}.
\end{equation*}
\end{enumerate}
\end{definition}

In the case when $DH(\bar{x})$ is onto (i.e. (EQ) is automatically satisfied ) the above definition  reduces to the Perturbed Mangasarian Fromovitz Constraint Qualification (PMFCQ) introduced in \cite{MR3070104}.

By combining \cite[Theorem 1]{MR3070104} with the Rank Theorem \ref{theorem:rank} we obtain the following representations of normal cones under the GPMFCQ.

\begin{theorem}\label{theorem:cone_representation}

 Let $\bar{x}\in \mathcal{F}$,
Assume that GPMFCQ holds for $\mathcal{F}$ at $\bar{x}$.
Then
\begin{equation*}
    \hat{N}(\bar{x},\mathcal{F})=\bigcap_{\varepsilon>0} \operatorname{cl}^* \operatorname{cone}\{ Dg_t(\bar{x}),\ t\in T_\varepsilon(\bar{x})\} + (DH_1(\bar{x}))^*(F_1^*).
\end{equation*}
Moreover, if   $g_t:\ E \rightarrow \mathbb{R}\cup \{+\infty \}$, $t\in T$  are uniformly Fr\'echet differentiable at $\bar{x}$ then
\begin{equation*}
    N(\bar{x},\mathcal{F})=\hat{N}(\bar{x},\mathcal{F})=\bigcap_{\varepsilon>0} \operatorname{cl}^* \operatorname{cone}\{ Dg_t(\bar{x}),\ t\in T_\varepsilon(\bar{x})\} + (DH_1(\bar{x}))^*(F_1^*).
\end{equation*}
\end{theorem}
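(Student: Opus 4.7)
The plan is to reduce the non-surjective situation to the surjective one via the Rank Theorem \ref{theorem:rank}, then invoke \cite[Theorem 1]{MR3070104} on the reduced problem, and finally pull the normal-cone representation back through the diffeomorphisms. The GPMFCQ condition (EQ) has been tailored precisely to supply the hypotheses of Theorem \ref{theorem:rank}: the kernel $E_2 = \ker DH(\bar{x})$ is closed and split by $E_1$, the image $F_1 = \operatorname{im} DH(\bar{x})$ is closed and split by $F_2$, and the requirement $\operatorname{im} DH(x) \cap F_2 = \{0\}$ on a neighborhood of $\bar{x}$ guarantees that the rank of $DH(x)$ is locally constant, i.e. $DH(x)|_{E_1}: E_1 \to DH(x)(E) \subset F_1$ is an isomorphism near $\bar{x}$.

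First I would apply Theorem \ref{theorem:rank} to obtain open neighborhoods and diffeomorphisms $\psi: U_1 \to U_2$ and $\varphi: V_1 \to V_2$ of class $C^1$ such that $(\varphi \circ H \circ \psi)(w, e) = (w, 0)$ for $(w, e) \in U_1 \subset F_1 \oplus E_2$. Consequently, in the $\psi$-coordinates, the zero set $\{x : H(x) = 0\}$ becomes $\{(0, e) \in U_1\} = (\{0\} \times E_2) \cap U_1$, so locally the equality constraint reduces to the linear condition $w = 0$, whose derivative is the projection onto $F_1$ and is trivially surjective. Defining $\tilde{g}_t := g_t \circ \psi$, the transformed feasible set $\psi^{-1}(\mathcal{F} \cap U_2)$ is exactly the feasible set of a smooth infinite program whose equality mapping has surjective derivative at $\psi^{-1}(\bar{x})$, and whose inequality constraints $\tilde g_t$ still satisfy PMFCQ in the sense of \cite{MR3070104}, because condition (IQ) of GPMFCQ is expressed precisely in terms of $H_1$ and $Dg_t$ and survives the chain-rule pullback through the $C^1$ diffeomorphism $\psi$.

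Next I would apply \cite[Theorem 1]{MR3070104} to this transformed problem. That theorem yields
\begin{equation*}
\hat N(\psi^{-1}(\bar x), \psi^{-1}(\mathcal F \cap U_2)) \;=\; \bigcap_{\varepsilon>0}\operatorname{cl}^{*} \operatorname{cone}\{D\tilde g_t(\psi^{-1}(\bar x)) : t\in T_\varepsilon(\bar x)\} + \operatorname{im}(D\tilde H_1(\psi^{-1}(\bar x)))^{*},
\end{equation*}
where $\tilde H_1$ is the $w$-component of the reduced equality operator. To transfer this identity back to $\bar x$, I use that Fr\'echet normal cones transform covariantly under $C^1$ diffeomorphisms via the adjoint of the derivative, i.e.\ $\hat N(\bar x, \mathcal F) = (D\psi(\psi^{-1}(\bar x))^{-1})^{*}\hat N(\psi^{-1}(\bar x), \psi^{-1}(\mathcal F \cap U_2))$, together with the chain rule $D\tilde g_t = Dg_t \circ D\psi$ and $D\tilde H_1 = DH_1 \circ D\psi$. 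Since $D\psi(\psi^{-1}(\bar x))$ is a linear isomorphism between the ambient spaces, pulling back a weak-$^*$-closed cone preserves weak-$^*$-closedness, and the sum $\operatorname{cone}\{Dg_t(\bar x)\} + (DH_1(\bar x))^{*}F_1^{*}$ emerges after cancellation of $(D\psi)^{*}$ on both summands.

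The main obstacle, and the point where the argument has to be executed carefully, is verifying that the identity (IQ) at $\bar x$ translates into a bona fide PMFCQ for $\tilde g_t$ at $\psi^{-1}(\bar x)$ uniformly in $t \in T_\varepsilon$, since the index set $T$ may be uncountable and the image of $\tilde x$ under $(D\psi)^{-1}$ plays the role of the Mangasarian–Fromovitz direction; this requires the uniform inactivity in (IQ) to be preserved by a fixed linear isomorphism, which it is, but must be stated explicitly so that \cite[Theorem 1]{MR3070104} applies without modification. Finally, for the second assertion, the uniform Fr\'echet differentiability of the $g_t$ at $\bar x$ is preserved by composition with a $C^1$ diffeomorphism on a small neighborhood, so the corresponding conclusion of \cite[Theorem 1]{MR3070104} gives $N(\psi^{-1}(\bar x), \psi^{-1}(\mathcal F \cap U_2)) = \hat N(\psi^{-1}(\bar x), \psi^{-1}(\mathcal F \cap U_2))$, and the same transfer via $D\psi$ (which also preserves limiting normal cones, being a diffeomorphism) yields $N(\bar x, \mathcal F) = \hat N(\bar x, \mathcal F)$ with the stated representation.
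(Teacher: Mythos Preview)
Your approach is correct in principle but takes a genuinely different route from the paper.  The paper does \emph{not} transform coordinates and pull the normal-cone identity back through $D\psi$.  Instead it introduces the auxiliary feasible set
\[
\mathcal{F}_1:=\{x\in E\mid H_1(x)=0,\ g_t(x)\le0,\ t\in T\}
\]
(with the \emph{original} inequality constraints and the \emph{original} operator $H_1$, just discarding $H_2$) and proves, using the Rank Theorem together with the Local Representation Theorem, that $\mathcal{F}\cap U(\bar x)=\mathcal{F}_1\cap U(\bar x)$ for a suitable neighbourhood.  Since normal cones are local objects, $\hat N(\bar x,\mathcal F)=\hat N(\bar x,\mathcal F_1)$, and because $DH_1(\bar x):E\to F_1$ is surjective by construction, the (IQ) part of GPMFCQ is exactly PMFCQ for $\mathcal{F}_1$, so \cite[Theorem~2]{MR3070104} applies \emph{directly} at $\bar x$ without any coordinate change.

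Your route---straightening via $\psi$, applying the Mordukhovich--Nghia result in $(w,e)$-coordinates, and transporting back through $(D\psi)^{-*}$---also works, but costs extra bookkeeping: you must verify PMFCQ for the transformed data (checking that $D\psi(\psi^{-1}(\bar x))$ carries $\{0\}\times E_2$ onto $\ker DH_1(\bar x)$ so that the Mangasarian--Fromovitz direction survives), argue that weak$^*$-closures, conical hulls and the sum commute with the linear homeomorphism $(D\psi)^{-*}$, and check that uniform Fr\'echet differentiability of the $g_t$ is preserved under composition with $\psi$ (which needs a uniform bound on $\|Dg_t(\bar x)\|$).  The paper's device of replacing $H$ by $H_1$ while staying in the original space sidesteps all of this, since $g_t$, $Dg_t(\bar x)$ and the point $\bar x$ are untouched and the result of \cite{MR3070104} is quoted verbatim.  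Two minor points in your write-up: in $\psi$-coordinates the zero set is $\{(\bar w,e)\}$ rather than $\{(0,e)\}$ unless one normalises $\varphi(0)=(0,0)$, and the paper actually cites Theorem~2 (not Theorem~1) of \cite{MR3070104}.
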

\begin{proof}
Let $\bar{x}\in \mathcal{F}$ and define $\mathcal{F}_1$ as
\begin{equation*}
    \mathcal{F}_1=\{ x \in E \mid H_1(x)=0,\quad g_t(x)\leq 0,\ t\in T \},
\end{equation*}
The proof reduces to showing that there exists $U(\bar{x})\subset E$ such that $\mathcal{F}\cap U(\bar{x})=\mathcal{F}_1\cap U(\bar{x})$.

By Rank Theorem \ref{theorem:rank} applied to $H$, there exists $U_1(\bar{x})\subset E$ and diffeomorphisms $\psi:\   F_1 \oplus E_2 \supset V(\bar{w},\bar{e}) \rightarrow U_1(\bar{x})\subset E_1 \oplus E_2 $, $\varphi:\  F_1 \oplus F_2 \supset V_1(H(\bar{x})) \rightarrow V_2 \subset F_1 \oplus F_2$ such that
\begin{equation}\label{eq:rank}
    \varphi \circ H \circ \psi (w,e) = (w,0) \quad \text{for all} \quad (w,e)\in V(\bar{w},\bar{e})=\psi^{-1}(U_1(\bar{x})), \ \bar{x}=\psi(\bar{w},\bar{e}). 
\end{equation}
Moreover,  by Local Representation Theorem \ref{theorem:local_representation}  there exists $\eta:\ F_1\oplus E_2 \supset V(\bar{w},\bar{e}) \rightarrow F_2$, $C^1$ function such that $H\circ \psi (w,e)=(w,\eta(w,e))$. By \eqref{eq:rank} we have
\begin{align}
    \begin{aligned}
\label{eq:rank2}
   H \circ \psi (w,e) &=  \varphi^{-1}(w,0)= (w,\eta(w,e))\\
   &\text{for all} \quad (w,e)\in V(\bar{w},\bar{e})=\psi^{-1}(U_1(\bar{x})), \ \bar{x}=\psi(\bar{w},\bar{e}). 
   \end{aligned}
\end{align}
Let $x\in U_1(\bar{x})\cap \mathcal{F}_1$. Then $H_1(x)=0$ and $x=\psi(w,e)$ for some $(w,e)\in V(\bar{w},\bar{e})$. By \eqref{eq:rank2},
\begin{equation*}
    0=H_1(x)=H_1\circ \psi (w,e)= w,
\end{equation*}
i.e., $w=0$. Moreover, by \eqref{eq:rank2}, 
\begin{equation*}
    H_2(x)=H_2(\psi (w,e))=\eta (0,e)=\varphi^{-1}(0,0).
\end{equation*}
Therefore $\eta(0,e)=\eta(0,\bar{e})$. Then
\begin{equation*}
    H_2(x)=H_2(\psi (w,e))=H_2(\psi (0,\bar{e}))=H_2(\bar{x})=0
\end{equation*}
Therefore $x\in \mathcal{F}$.

By \cite[Theorem 2]{MR3070104} applied to $\mathcal{F}_1$ we have that
\begin{equation*}
    \hat{N}(\bar{x},\mathcal{F}_1)= \bigcap_{\varepsilon>0} \operatorname{cl}^* \operatorname{cone}\{ Dg_t(\bar{x}),\ t\in T_\varepsilon(\bar{x})\} + (DH_1(\bar{x}))^*(F_1^*)
\end{equation*}
Since $\mathcal{F}\cap U_1(\bar{x})=\mathcal{F}_1\cap U_1(\bar{x})$ we obtain that
\begin{equation*}
    \hat{N}(\bar{x},\mathcal{F})=\hat{N}(\bar{x},\mathcal{F}_1)= \bigcap_{\varepsilon>0} \operatorname{cl}^* \operatorname{cone}\{ Dg_t(\bar{x}),\ t\in T_\varepsilon(\bar{x})\} + (DH_1(\bar{x}))^*(F_1^*).
\end{equation*}
\end{proof}
The following theorem of existence of Lagrange multipliers follows:
\begin{theorem}
Let $\bar{x}$ be local minimizer of the infinite program
\begin{equation}\label{prob:main}
    \min_{x\in \mathcal{F}} f(x), 
\end{equation}
where $f:\ E\rightarrow \mathbb{R}\cup \{ +\infty \}$ is Fr\'echet differentiable at $\bar{x}$. Assume that GPMFCQ holds at $\bar{x}\in \mathcal{F}$. Then
\begin{equation*}
    0\in \nabla f(\bar{x}) + \bigcap_{\varepsilon>0}\operatorname{cl}^* \operatorname{cone} \{ \nabla g_t(\bar{x}),\ t\in T_\varepsilon(\bar{x})\} + DH_1(\bar{x})^*(F_1^*).
\end{equation*}
Moreover, if NFMCQ holds at $\bar{x}$, then
\begin{equation*}
    0\in \nabla f(\bar{x}) +  \operatorname{cone} \{ \nabla g_t(\bar{x}),\ t\in T(\bar{x})\} + DH_1(\bar{x})^*(F_1^*).
\end{equation*}

\end{theorem}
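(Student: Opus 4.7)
The plan is to reduce the theorem to a direct application of the Fermat-type necessary optimality condition combined with the normal cone representation provided by Theorem \ref{theorem:cone_representation}.

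First I would invoke the standard first-order necessary condition for an unconstrained-looking problem over the set $\mathcal{F}$: since $\bar{x}$ is a local minimizer of $f$ over $\mathcal{F}$ and $f$ is Fr\'echet differentiable at $\bar{x}$, the inclusion
\begin{equation*}
    -\nabla f(\bar{x}) \in \hat{N}(\bar{x},\mathcal{F})
\end{equation*}
holds. This follows immediately from the definition of the Fr\'echet normal cone: for any feasible $x$ near $\bar{x}$ one has $f(x) - f(\bar{x}) \geq 0$, and the Fr\'echet expansion of $f$ at $\bar{x}$ gives $\langle \nabla f(\bar{x}), x - \bar{x}\rangle \geq o(\|x-\bar{x}\|)$, which is exactly the defining inequality of $\hat{N}(\bar{x},\mathcal{F})$ for $-\nabla f(\bar{x})$.

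Next I would plug in the normal cone representation just established. Since GPMFCQ holds at $\bar{x}$, Theorem \ref{theorem:cone_representation} yields
\begin{equation*}
    \hat{N}(\bar{x},\mathcal{F}) = \bigcap_{\varepsilon>0}\operatorname{cl}^* \operatorname{cone}\{\nabla g_t(\bar{x}),\ t\in T_\varepsilon(\bar{x})\} + (DH_1(\bar{x}))^*(F_1^*).
\end{equation*}
Combining the two displayed inclusions and rearranging signs (noting that the right-hand side is a cone, so invariant under multiplication by nonnegative scalars in the conic/dual parts) gives the first conclusion of the theorem.

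For the second assertion I would use the NFMCQ hypothesis to collapse the intersection indexed by $\varepsilon$. Under NFMCQ, the cone generated by the pairs $(\nabla g_t(\bar{x}), \langle \nabla g_t(\bar{x}), \bar{x}\rangle - g_t(\bar{x}))$ is weak*-closed in $E^* \times \mathbb{R}$, and a standard argument (cf.\ \cite[Theorem 1]{MR3070104} and its proof) shows that this closedness forces
\begin{equation*}
    \bigcap_{\varepsilon>0} \operatorname{cl}^* \operatorname{cone}\{\nabla g_t(\bar{x}),\ t\in T_\varepsilon(\bar{x})\} = \operatorname{cone}\{\nabla g_t(\bar{x}),\ t\in T(\bar{x})\}.
\end{equation*}
Indeed, any element of the left-hand side lifts to a pair whose second coordinate can be driven to zero via the $\varepsilon$-active approximation, and weak*-closedness lets one extract a finite conic combination supported on strictly active indices $T(\bar{x})$. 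Substituting this identity into the first conclusion yields the refined KKT-type formula. The main technical obstacle is this last reduction under NFMCQ; all the hard work (the presence of nonsurjective $DH(\bar{x})$, the infinite index set $T$, and the mixed equality/inequality normal cone description) is already absorbed into Theorem \ref{theorem:cone_representation} via the Rank Theorem and GPMFCQ, so no further delicate estimates are needed here.
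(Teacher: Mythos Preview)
Your proposal is correct and follows essentially the same approach as the paper's proof: derive $0\in\nabla f(\bar{x})+\hat{N}(\bar{x},\mathcal{F})$ from the Fermat rule (the paper routes this through $\mathcal{F}\cap U_1(\bar{x})$ and Mordukhovich's sum rule, but since normal cones are local this is the same thing), then substitute the representation from Theorem~\ref{theorem:cone_representation}, and finally invoke NFMCQ to collapse the $\varepsilon$-intersection. Your parenthetical about ``rearranging signs'' via cone invariance is unnecessary---$-\nabla f(\bar{x})\in C$ trivially gives $0\in\nabla f(\bar{x})+C$ for any set $C$---but this does not affect the argument.
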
 
\begin{proof}
Since $\bar{x}$ is local minimizer of \eqref{prob:main} we have that $\bar{x}$ is also a local minimizer of the following problem
\begin{equation}\label{prob:main2}
        \min_{x\in \mathcal{F}\cap U_1(\bar{x})} f(x),  
\end{equation}
where $U_1(\bar{x})$ is such that rank theorem holds. By Fermat rule to problem \eqref{prob:main2} we obtain that
\begin{equation*}
    0 \in \hat{\partial}(f+\delta(\cdot;\mathcal{F}\cap U_1(\bar{x})))(\bar{x}),
\end{equation*}
where $\hat{\partial}$ denotes Fr\'echet differentiable. 
Since $f$ is Fr\'echet differentiable at $\bar{x}$, by Proposition 1.101 of [Mordukhovich 1] it follows that \begin{align*}
    0 \in \nabla f(\bar{x})+\hat{\partial}(\delta(\cdot;\mathcal{F}\cap U_1(\bar{x})))(\bar{x})=&\nabla f(\bar{x})+\hat{N}(\bar{x},\mathcal{F}_1\cap U_1(\bar{x}))\\
    =& \nabla f(\bar{x})+\hat{N}(\bar{x},\mathcal{F}).
\end{align*}
Then by Theorem  \ref{theorem:cone_representation} of representation of normal cone to $\mathcal{F}$ at $\bar{x}$ we have
\begin{equation*}
    0 \in \nabla f(\bar{x})+\bigcap_{\varepsilon>0} \operatorname{cl}^* \operatorname{cone}\{ Dg_t(\bar{x}),\ t\in T_\varepsilon(\bar{x})\} + (DH_1(\bar{x}))^*(F_1^*).
\end{equation*}
The second part of the assertion follows directly from definition of NFMCQ and the properties of of the sets $T_\varepsilon(\bar{x})$.
\end{proof}

In the case when $DH(\bar{x})$ is onto the above theorem reduces to \cite[Theorem 3]{MR3070104}.

\section{Abadie condition under GPMFCQ}\label{section:Abadie}

In this section we use GPMFCQ to prove Abadie condition at some $\bar{x}$, i.e. linearized cone to $\mathcal{F}$ at $\bar{x}$ is contained in the tangent cone to $\mathcal{F}$ at $\bar{x}$:
\begin{equation}\label{inclusion:Abadie}
\Gamma_\mathcal{F}(\bar{x})\subset T_\mathcal{F}(\bar{x})
\end{equation}

Let $\bar{x}\in\mathcal{F}$. Consider the function $h:E\rightarrow\mathbb{R}$,
\begin{equation} 
\label{eq:funkcjah} 
h(d):=\inf_{\varepsilon>0}\sup_{t\in T_{\varepsilon}}\langle D g_{t}(\bar{x}),d\rangle, 
\end{equation}
where $T_{\varepsilon}(\bar{x}):=\{t\in T\ |\ g_{t}(\bar{x})\ge-\varepsilon\}$.

We will prove under  continuity (with respect to $t$) of the gradients $Dg_{t}(\bar{x})$ that
\begin{equation} 
\label{eq:linearizedh} 
\Gamma_\mathcal{F}(\bar{x}):=\{d\in E\ \mid DH(\bar{x})d=0,
 \ h(d)\le 0\}.
 \end{equation}
 
 And moreover, if the function $h(\cdot)$ is u.s.c., then the relative interior\footnote{Recall that throughout the paper we assume \eqref{assumption:aff}.}
 $\operatorname{ri}$ of $\Gamma_\mathcal{F}(x)$
 \begin{equation} 
\label{eq:linearizedh1} 
\mbox{ri}\Gamma_\mathcal{F}(\bar{x}):=\{d\in E\ \mid DH(\bar{x})d=0,
 \ h(d)< 0\}.
 \end{equation}

 In the following proposition we provide the representation of the linearized cone to $\mathcal{F}$ at $\bar{x}$ in term of the function $h$ given by \eqref{eq:funkcjah}.
 \begin{proposition}
     Assume that $Dg_t(\bar{x})$ is continuous on $T$. Then $\Gamma_\mathcal{F}(\bar{x})=\{d\in E\ \mid DH(\bar{x})d=0,
 \ h(d)\le 0\}$.
 \end{proposition}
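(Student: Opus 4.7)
The plan is to prove the two inclusions separately. Write $\Gamma := \{d\in E \mid DH(\bar{x})d=0,\ \langle Dg_t(\bar{x}),d\rangle \le 0,\ t\in T(\bar{x})\}$ and $\Gamma_h := \{d\in E \mid DH(\bar{x})d=0,\ h(d)\le 0\}$. In both directions the first condition $DH(\bar{x})d=0$ is common, so I will only have to compare the inequality parts.

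\textbf{Easy direction $\Gamma_h\subseteq\Gamma$.} Fix $d\in\Gamma_h$ and any $t_0\in T(\bar{x})$. Since $g_{t_0}(\bar{x})=0\ge -\varepsilon$ for every $\varepsilon>0$, we have $t_0\in T_\varepsilon(\bar{x})$, hence
\begin{equation*}
\langle Dg_{t_0}(\bar{x}),d\rangle \le \sup_{t\in T_\varepsilon(\bar{x})}\langle Dg_t(\bar{x}),d\rangle
\end{equation*}
for every $\varepsilon>0$. Passing to the infimum over $\varepsilon>0$ gives $\langle Dg_{t_0}(\bar{x}),d\rangle\le h(d)\le 0$, so $d\in\Gamma$.

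\textbf{Harder direction $\Gamma\subseteq\Gamma_h$.} Fix $d\in\Gamma$ and argue by contradiction: suppose $h(d)>0$, so there exists $\delta>0$ with $\sup_{t\in T_\varepsilon(\bar{x})}\langle Dg_t(\bar{x}),d\rangle \ge \delta$ for every $\varepsilon>0$. Choosing $\varepsilon_n=1/n$ produces a sequence $t_n\in T_{1/n}(\bar{x})$, i.e.\ $g_{t_n}(\bar{x})\ge -1/n$, with $\langle Dg_{t_n}(\bar{x}),d\rangle \ge \delta/2$. Using compactness of $T$ (together with continuity in $t$ of $g_t(\bar{x})$ and of $Dg_t(\bar{x})$), pass to a subnet/subsequence $t_{n_k}\to\bar t\in T$. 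Continuity of $t\mapsto g_t(\bar{x})$ combined with feasibility $g_{\bar t}(\bar{x})\le 0$ forces $g_{\bar t}(\bar{x})=0$, i.e.\ $\bar t\in T(\bar{x})$. Continuity of $t\mapsto Dg_t(\bar{x})$ (used on the fixed vector $d$) then yields $\langle Dg_{\bar t}(\bar{x}),d\rangle\ge \delta/2>0$, contradicting $d\in\Gamma$.

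\textbf{Main obstacle.} The nontrivial inclusion relies on being able to extract a limit point $\bar t\in T(\bar{x})$ from the sequence $\{t_n\}$ and to pass the scalar inequality to the limit. This is where the hypothesis ``$Dg_t(\bar{x})$ is continuous on $T$'' is used, but it is not sufficient by itself: one also needs (i) a compactness property of $T$ (or at least of the sublevel sets $T_\varepsilon(\bar{x})$) and (ii) continuity of $t\mapsto g_t(\bar{x})$ in order to conclude $\bar t\in T(\bar x)$. I would therefore make these additional standing assumptions on $T$ explicit (they are implicit in the continuity wording and customary in the infinite programming setting), and verify that they are the only ingredients beyond the easy direction. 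Everything else in the argument is bookkeeping with the monotone family $\{T_\varepsilon(\bar{x})\}_{\varepsilon>0}$ and the $\inf$–$\sup$ defining $h$.
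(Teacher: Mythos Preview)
Your argument is essentially the paper's: the easy inclusion uses $T(\bar{x})\subset T_\varepsilon(\bar{x})$ for every $\varepsilon>0$, and the reverse inclusion is by contradiction, extracting $t_n\in T_{\varepsilon_n}$ with $\langle Dg_{t_n}(\bar{x}),d\rangle$ bounded away from zero and passing to a limit $\bar t\in T(\bar{x})$ via continuity of $t\mapsto Dg_t(\bar{x})$. Your explicit flagging of the extra standing assumptions (compactness of $T$, continuity of $t\mapsto g_t(\bar{x})$) is warranted---the paper's proof tacitly uses them as well, and even writes the unjustified intermediate line ``$\langle Dg_t(\bar{x}),d\rangle>\kappa$ for every $t\in T_\varepsilon$'' where only existence of one such $t$ actually follows from $\sup_{t\in T_\varepsilon}\langle Dg_t(\bar{x}),d\rangle>\kappa$.
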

 
 \begin{proof}
 To show that $\Gamma_\mathcal{F}(\bar{x})\subset \{d\in E\ \mid DH(\bar{x})d=0,
 \ h(d)\le 0\}$, 
 by contradiction, take any $d\in E$ with $h(d)>0$, i.e. for some  $\kappa>0$
 \begin{equation*}
 \sup_{t\in T_{\varepsilon}}\langle Dg_{t}(\bar{x}),d\rangle>\kappa\ \ \text{for every } \varepsilon>0.
 \end{equation*}
 Since $T(\bar{x})\subset T_{\varepsilon_{2}}\subset T_{\varepsilon_{1}}$ for $\varepsilon_{2}\le \varepsilon_{1}$, there exists $\varepsilon_{0}>0$ such that
 \begin{equation*}
 \langle Dg_{t}(\bar{x}),d\rangle>\kappa\ \ \text{for every } t\in T_{\varepsilon}\subset T_{\varepsilon_{0}}.
 \end{equation*}
For each $\varepsilon_{n}\downarrow 0$ and each $\bar{t}\in T(\bar{x})$ there exists a sequence $t_{n}\in T_{\varepsilon_{n}}$ such that $t_{n}\rightarrow \bar{t}$ and
\begin{equation*}
\kappa<\langle Dg_{t_{n}}(\bar{x}),d\rangle\rightarrow \langle Dg_{\bar{t}}(\bar{x}),d\rangle\ge\kappa,
 \end{equation*}
 which shows that $d\not\in\Gamma_{\mathcal{F}}(\bar{x})$.
 
 On the other hand, let $d\in\{d\in E\ \mid DH(\bar{x})d=0,
 \ h(d)\le 0\}$, i.e. there exists $\varepsilon_{0}>0$ such that
\begin{equation*}
 \sup_{t\in T_{\varepsilon}}\langle Dg_{t}(\bar{x}),d\rangle\le 0. 
 \end{equation*}
 for all $\varepsilon<\varepsilon_{0}$. In particular,
 \begin{equation*}
 \langle Dg_{t}(\bar{x}),d\rangle\le 0 \ \ \ t\in T(\bar{x}),
 \end{equation*}
 i.e. $d\in \Gamma_\mathcal{F}(\bar{x}).$
 \end{proof}

 Now we will show that $h_\varepsilon(d):=\sup_{t\in T_{\varepsilon}} \langle D g_{t}(\bar{x}),d\rangle$ is u.s.c. for any $\varepsilon>0$.  Recall that a multifunction $\Gamma :\ X\rightrightarrows Y$ acting between two topological spaces $X,Y$, is u.s.c. at $x_0\in X$ if the following condition holds
 \begin{equation*}
     \forall_{\stackrel{V\supset \Gamma(x_0)}{V \text{ open}}} \exists_{U(x_0) \text{ open} }  \forall_{x\in U(x)}\quad  \Gamma(x)\subset V.
 \end{equation*}

 \begin{proposition} Assume that $Dg_t(\bar{x})$ is continuous $T$.
     If the multivalued mapping $\mathcal{T}:\ \mathbb{R}_{++} \rightrightarrows T$, where $\mathcal{T}(\varepsilon):= T_\varepsilon(\bar{x})=\{t\in T\ |\ g_{t}(\bar{x})\ge-\varepsilon\}$ is u.s.c. on $\mathbb{R}_{++}$, then the function the family of functions $h_\varepsilon:\ E\times \mathbb{R}$, $\varepsilon>0$ is u.s.c. for any $\varepsilon>0$.
 \end{proposition}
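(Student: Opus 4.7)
The plan is to read the proposition as saying that for each fixed $d \in E$ the real-valued map $\varepsilon \mapsto h_\varepsilon(d)$ is upper semicontinuous on $\mathbb{R}_{++}$, and to derive this from the upper semicontinuity of $\mathcal{T}$ combined with the hypothesis that $t \mapsto Dg_t(\bar{x})$ is continuous on $T$. I would fix an arbitrary $\varepsilon_0 > 0$ together with a scalar $\alpha > h_{\varepsilon_0}(d)$, and aim to produce an open neighbourhood $U$ of $\varepsilon_0$ such that $h_\varepsilon(d) \leq \alpha$ for every $\varepsilon \in U$; once this is shown, taking $\alpha \downarrow h_{\varepsilon_0}(d)$ yields $\limsup_{\varepsilon \to \varepsilon_0} h_\varepsilon(d) \leq h_{\varepsilon_0}(d)$, which is the required u.s.c.

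The key construction is the open test set
\[
V := \{ t \in T \mid \langle Dg_t(\bar{x}), d \rangle < \alpha \},
\]
which is open in $T$ because $t \mapsto Dg_t(\bar{x})$ is continuous on $T$ and the linear functional $\langle \cdot, d \rangle$ is continuous on $E^*$. Since $\alpha$ strictly exceeds the supremum $\sup_{t \in T_{\varepsilon_0}(\bar{x})} \langle Dg_t(\bar{x}), d \rangle$, every index in $\mathcal{T}(\varepsilon_0) = T_{\varepsilon_0}(\bar{x})$ satisfies $\langle Dg_t(\bar{x}), d \rangle < \alpha$, so $\mathcal{T}(\varepsilon_0) \subset V$. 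Upper semicontinuity of $\mathcal{T}$ at $\varepsilon_0$, in the containment-in-open-set sense recalled just before the proposition, then furnishes a neighbourhood $U$ of $\varepsilon_0$ with $\mathcal{T}(\varepsilon) \subset V$ for every $\varepsilon \in U$. Passing to the supremum over $t \in \mathcal{T}(\varepsilon)$ gives $h_\varepsilon(d) \leq \alpha$ on $U$, which closes the argument.

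The main technical point I expect to highlight is the passage from the supremum inequality $h_{\varepsilon_0}(d) < \alpha$ to the pointwise inclusion $\mathcal{T}(\varepsilon_0) \subset V$. This uses the strict inequality in an essential way, since the supremum need not be attained on $T_{\varepsilon_0}(\bar{x})$; the weaker hypothesis $\alpha \geq h_{\varepsilon_0}(d)$ would leave open the possibility of a maximising sequence realising equality and would invalidate the openness of the covering. A secondary item worth checking is consistency of topologies, namely that the topology on $T$ with respect to which $\mathcal{T}$ is u.s.c.\ is the same one making $t \mapsto Dg_t(\bar{x})$ continuous; under the paper's standing continuity assumption this matching is automatic, and no compactness of $T_\varepsilon(\bar{x})$ is required.
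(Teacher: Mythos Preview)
Your reading of the proposition is not the paper's. You establish upper semicontinuity of $\varepsilon\mapsto h_\varepsilon(d)$ for each \emph{fixed} $d$, whereas the paper treats $(d,\varepsilon)\mapsto h_\varepsilon(d)$ as a function on $E\times\mathbb{R}_{++}$ and proves joint upper semicontinuity. This matters because the only use of the proposition is in the subsequent Corollary, which needs each $h_\varepsilon(\cdot)$ to be u.s.c.\ \emph{on $E$} so that the infimum $h(d)=\inf_{\varepsilon>0}h_\varepsilon(d)$ is u.s.c.\ on $E$. Your argument, while correct for what it proves, gives semicontinuity in the wrong variable for that purpose.

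The paper's route is different: it shows that $\phi(t,(d,\varepsilon)):=\langle Dg_t(\bar{x}),d\rangle$ is jointly continuous on $T\times E\times\mathbb{R}_{++}$, trivially lifts the assumed u.s.c.\ multifunction to $\Gamma:E\times\mathbb{R}_{++}\rightrightarrows T$, $\Gamma(d,\varepsilon)=T_\varepsilon(\bar{x})$ (constant in $d$, hence still u.s.c.), and then invokes a marginal-function theorem (the cited ``Theorem~2, page~116'') to conclude that $(d,\varepsilon)\mapsto\sup_{t\in\Gamma(d,\varepsilon)}\phi(t,(d,\varepsilon))$ is u.s.c. Your open-set argument is more self-contained, but as written it does not handle perturbations in $d$: the test set $V=\{t:\langle Dg_t(\bar{x}),d\rangle<\alpha\}$ depends on $d$, and controlling $\langle Dg_t(\bar{x}),d\rangle-\langle Dg_t(\bar{x}),d_0\rangle$ uniformly over $t\in\mathcal{T}(\varepsilon)$ would require an additional ingredient such as a uniform bound on $\|Dg_t(\bar{x})\|$. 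If you supply that, your method extends to the joint statement; without it, there is a genuine gap relative to what the Corollary needs.
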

 
 \begin{proof}

 Let the function $\phi$ defined as $\phi(t,(d,\varepsilon)):=\langle D g_{t}(\bar{x}),d\rangle $. Then for any sequence $t_n\rightarrow t$ and for any sequence $d_n\rightarrow d$ we have
 \begin{align*}
     &|\langle D g_{t_n}(\bar{x}),d_n\rangle -\langle D g_{t}(\bar{x}),d\rangle | \\
     &= |\langle D g_{t_n}(\bar{x}),d_n\rangle - \langle Dg_t(\bar{x}) , d_n \rangle+\langle Dg_t(\bar{x}) , d_n \rangle - \langle D g_{t_n}(\bar{x}),d \rangle + \langle D g_{t_n}(\bar{x}),d \rangle -\langle D g_{t}(\bar{x}),d\rangle|\\
     &\leq \| Dg_{t_n}(\bar{x})-Dg_t(\bar{x})\|\|d_n\| + \| Dg_{t_n}(\bar{x})\|\|d_n-d\| + \|Dg_{t_n}(\bar{x})- Dg_{t}(\bar{x})\|\|d\|
     \rightarrow 0.
 \end{align*}
 Hence $\phi$ is continuous on $(t,(d,\varepsilon))\in T\times  E\times \mathbb{R}_{++}$, Define multifunction $\Gamma:\ E\times \mathbb{R}_{++} \rightrightarrows T$ as  $\Gamma((d,\varepsilon))=T_{\varepsilon}(x)$. By assumption the function $\Gamma$ is u.s.c. on $E\times \mathbb{R}_{++}$. The rest of the proof follows from [Theorem 2, page 116].
 \end{proof}

 \begin{corollary}
Assume that $Dg_t(\bar{x})$ is continuous on $T$ and that the multi-valued mapping $\mathcal{T}:\ \mathbb{R}_{++} \rightrightarrows T$, where $\mathcal{T}(\varepsilon):= T_\varepsilon(\bar{x})=\{t\in T\ |\ g_{t}(\bar{x})\ge-\varepsilon\}$ is u.s.c. on $\mathbb{R}_{++}$. Assume that  $\operatorname{aff} \Gamma(\bar{x}):= \{ d \in E   \mid DH(\bar{x})d=0\} $.
Then 
$\mbox{ri\,}\Gamma_\mathcal{F}(\bar{x}):=\{d\in E\ \mid DH(\bar{x})d=0,
 \ h(d)< 0\}$.
 \end{corollary}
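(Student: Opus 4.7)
The plan is to establish the claimed set equality in two steps, using as the main tool the upper semicontinuity of each $h_\varepsilon(\cdot)=\sup_{t\in T_\varepsilon(\bar{x})}\langle Dg_t(\bar{x}),\cdot\rangle$ (the preceding proposition), together with the identification $\operatorname{aff}\Gamma_\mathcal{F}(\bar{x})=\ker DH(\bar{x})$ from the hypothesis. Note that $h=\inf_{\varepsilon>0}h_\varepsilon$ is attained as a monotone limit, since $\varepsilon\mapsto T_\varepsilon(\bar{x})$ and hence $\varepsilon\mapsto h_\varepsilon(d)$ is nondecreasing; this monotonicity is the bridge between the pointwise u.s.c.\ of the approximants and the structure of~$h$.

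For the inclusion $\{d:DH(\bar{x})d=0,\,h(d)<0\}\subset\operatorname{ri}\Gamma_\mathcal{F}(\bar{x})$, I take $d$ in the right-hand set. Since $h(d)=\inf_{\varepsilon>0}h_\varepsilon(d)<0$, there exists $\varepsilon_0>0$ with $h_{\varepsilon_0}(d)<0$. Upper semicontinuity of $h_{\varepsilon_0}$ yields a neighbourhood $U$ of $d$ in $E$ on which $h_{\varepsilon_0}<0$. For any $d'\in U$ with $DH(\bar{x})d'=0$, $h(d')\le h_{\varepsilon_0}(d')<0$, so the previous proposition places $d'$ in $\Gamma_\mathcal{F}(\bar{x})$; by the affine-hull hypothesis, $U\cap\{DH(\bar{x})\cdot=0\}$ is a relative neighbourhood of $d$ inside $\operatorname{aff}\Gamma_\mathcal{F}(\bar{x})$, whence $d\in\operatorname{ri}\Gamma_\mathcal{F}(\bar{x})$.

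For the reverse inclusion I take $d\in\operatorname{ri}\Gamma_\mathcal{F}(\bar{x})$ and pick, via the first inclusion, a witness $d_1$ with $DH(\bar{x})d_1=0$ and $h(d_1)<0$; in particular $d_1\in\Gamma_\mathcal{F}(\bar{x})$. The standard line-extension property of the relative interior supplies $\mu>1$ such that $d_2:=\mu d+(1-\mu)d_1\in\Gamma_\mathcal{F}(\bar{x})$, so $h(d_2)\le 0$. Writing $d=\tfrac{1}{\mu}d_2+\tfrac{\mu-1}{\mu}d_1$ and using sublinearity of each $h_\varepsilon$ in its argument gives
\[
h_\varepsilon(d)\le\tfrac{1}{\mu}h_\varepsilon(d_2)+\tfrac{\mu-1}{\mu}h_\varepsilon(d_1).
\]
Fix $\alpha>0$ and $\varepsilon_0>0$ with $h_{\varepsilon_0}(d_1)\le-\alpha$, and choose $\varepsilon_1>0$ with $h_{\varepsilon_1}(d_2)\le\delta:=(\mu-1)\alpha/2$ (possible since $h(d_2)\le 0<\delta$). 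For $\varepsilon\le\min(\varepsilon_0,\varepsilon_1)$, monotonicity of $h_\varepsilon$ in $\varepsilon$ turns the display into $h_\varepsilon(d)\le-(\mu-1)\alpha/(2\mu)<0$, and passing to the infimum yields $h(d)<0$.

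The main obstacle lies in this second inclusion. Upper semicontinuity is available only for each fixed $h_\varepsilon$, not for $h$ itself, so one cannot simply transport the sub-level-set argument of the first part. Instead, the argument is forced through by coupling the choice of $\varepsilon$ to the sublinear combination witnessing $d\in\operatorname{ri}\Gamma_\mathcal{F}(\bar{x})$, and then exploiting the monotonicity $h_\varepsilon\le h_{\varepsilon_0}$ for $\varepsilon\le\varepsilon_0$ to absorb the positive contribution from $d_2$ into the strictly negative contribution from $d_1$.
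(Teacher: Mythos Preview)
Your argument is essentially correct but rests on a misconception that makes it more elaborate than needed. The infimum of \emph{any} family of upper semicontinuous functions is again upper semicontinuous (because $\{h<c\}=\bigcup_{\varepsilon>0}\{h_\varepsilon<c\}$ is a union of open sets), so $h$ itself is u.s.c.; this is exactly what the paper's proof invokes. With that in hand, the first inclusion is immediate from $\{h<0\}$ being open, and the second follows from the standard convex-analysis step (using that $h$ is convex as the monotone pointwise limit of the convex $h_\varepsilon$): if $h(d_1)<0$ and $d\in\operatorname{ri}\{h\le 0\}$, pick $\mu>1$ with $d_2:=\mu d+(1-\mu)d_1\in\{h\le0\}$ and conclude $h(d)\le\tfrac{1}{\mu}h(d_2)+\tfrac{\mu-1}{\mu}h(d_1)<0$ directly, with no need to descend to the individual $h_\varepsilon$ and their monotonicity in $\varepsilon$. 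Your route reproduces this by hand at the $h_\varepsilon$ level, which is valid but unnecessary; the paper's proof is accordingly much shorter.

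There is one genuine gap in your reverse inclusion. The phrase ``pick, via the first inclusion, a witness $d_1$ with $DH(\bar x)d_1=0$ and $h(d_1)<0$'' is not justified: the first inclusion only says that \emph{if} such a $d_1$ exists then it lies in $\operatorname{ri}\Gamma_{\mathcal F}(\bar x)$; it does not supply one. The nonemptiness of $\{d\in\ker DH(\bar x):h(d)<0\}$ is an additional hypothesis (precisely the (IQ) part of GPMFCQ), and without it the equality can fail. For instance, take $\ker DH(\bar x)=\mathbb R^2$, $T=(0,1]$ with all constraints active and $Dg_t(\bar x)=t\,e_1$; then $h(d)=\max(d_1,0)$, so $\Gamma_{\mathcal F}(\bar x)=\{d_1\le0\}$ has relative interior $\{d_1<0\}$ while $\{h<0\}=\emptyset$, yet all the corollary's stated hypotheses hold. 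The paper's one-line proof is equally silent on this point, so it is a shared omission; but in your write-up the appeal to ``the first inclusion'' at that spot is a non sequitur and should be replaced by an explicit nonemptiness assumption.
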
 
 \begin{proof}  

 Let us note that by assumption \eqref{assumption:aff}, $\operatorname{aff} \Gamma(\bar{x}):= \{ d \in E   \mid DH(\bar{x})d=0\} $. The proof follows from fact, that the infimum of family of u.s.c. functions is an u.s.c. function, therefore
 $h(\cdot)$ is u.s.c function on $E$ and
 $\mbox{ri\,}\Gamma_\mathcal{F}(\bar{x}):=\{d\in E\ \mid DH(\bar{x})d=0,
 \ h(d)< 0\}$, where relative interior of $ \Gamma(\bar{x})$ is an interior of $\{d\in E\ \mid 
 \ h(d)\leq  0\}$ on  $\operatorname{aff} \Gamma(\bar{x})$.
 \end{proof}
 
 The following Example provides sufficient conditions which guarantees the upper semi-continuity of the mapping $T_\varepsilon(\bar{x})$ as a multifunction of $\varepsilon$.

\begin{example}

 Let $\mathcal{T}(\varepsilon):=\{ t \in T=[a,b] \mid g_t(\bar{x})\geq - \varepsilon \}$ for any $\varepsilon>0$. Assume that 
\begin{enumerate}
    \item $g_t(\bar{x})$ is continuous as a function of $t\in T$,
    \item $g_t(\bar{x})$ has a uniformly continuous inverse as a function of $t$,
    \item exists $t\in T$ such that $g_t(\bar{x})=0$.
\end{enumerate}
Then for any $\varepsilon_n\rightarrow \varepsilon>0$ for any $\delta>0$ there exists $n_0$ such that for all $n\geq n_0$ we have

\begin{equation*}
    \mathcal{T}(\varepsilon_n) \subset \mathcal{T}(\varepsilon)+B(0,\delta).
\end{equation*}

Let us take any $\delta>0$. 
The assumption that there exists $t\in T$ such that $g_t(\bar{x})=0$ guarantees that $\mathcal{T}(\varepsilon)\neq \emptyset$ for any $\varepsilon>0$. 
First let us observe that if $\varepsilon_n < \varepsilon$ then the thesis follows by monotonicity of the mapping $\mathcal{T}$. Suppose that $\varepsilon_n\rightarrow \varepsilon$ is such that $\varepsilon_n>\varepsilon$. Let $t_n \in \mathcal{T}(\varepsilon_n)$ be a sequence. Without loss of generality we may assume that $g_{t_n}(\bar{x})\in [-\varepsilon_n,-\varepsilon)$ for all $n\in \mathbb{N}$.
Observe that, by assumptions, the set $\mathcal{T}(\varepsilon)$ is convex and closed.

Let $t_n^\prime = P_{\mathcal{T}(\varepsilon)}(t_n)$, $n\in \mathbb{N}$. Then $g_{t_n^\prime}(\bar{x})= - \varepsilon$. From the uniform continuity of inverse $g_t(\bar{x})$ we have:
    \begin{equation*}
        \forall_{\bar{\delta}>0} \ \exists_{\bar{\varepsilon}>0} \forall_{y_1=g_{s}(\bar{x}),\ y_2=g_{u}(\bar{x})} \quad |y_1-y_2|<\bar{\varepsilon} \implies |s-u| <\bar{\delta}.
    \end{equation*}
Since $\varepsilon_n\rightarrow\varepsilon$, by  uniform continuity of the inverse of $g_t(\bar{x})$ there exists $n_{\delta}\in \mathbb{N}$ such that for all $n \geq n_{\delta}$
\begin{align*}
    |t_n-t_n^\prime|<\delta \quad \text{if}\quad  |g_{t_n}(\bar{x})-g_{t_n^\prime}(\bar{x})|\leq |\varepsilon_n - \varepsilon|<\varepsilon_\delta.
\end{align*}
Therefore $ \mathcal{T}(\varepsilon_n) \subset \mathcal{T}(\varepsilon)+B(0,\delta)$ for all $n\geq n_\delta$. Note that by uniform continuity of the inverse $g_t(\bar{x})$, the choice $n_\delta$ does not depend upon the choice of sequence $t_n\in \mathcal{T}(\varepsilon_n)$.
\end{example}

The following theorem shows sufficient conditions for the Abadie CQ to  hold.

\begin{theorem}  \label{theorem:Abadie}
Let $(GPMFCQ)$ holds at $\bar{x}\in\mathcal{F}$ and assume that \eqref{assumption:aff} hold at $\bar{x}$. Let $\nabla g_{t}(\bar{x})$ be uniformly continuous with respect to $t\in T$. Then Abadie CQ \eqref{inclusion:Abadie} holds.
\end{theorem}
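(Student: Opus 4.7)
The plan is to reduce the statement to the classical PMFCQ setting through the Rank Theorem decomposition already exploited in the proof of Theorem~\ref{theorem:cone_representation}. Define
\[
\mathcal{F}_1:=\{x\in E\mid H_1(x)=0,\ g_t(x)\leq 0,\ t\in T\}.
\]
The same diffeomorphism argument yields a neighborhood $U(\bar{x})$ with $\mathcal{F}\cap U(\bar{x})=\mathcal{F}_1\cap U(\bar{x})$, so that $T_\mathcal{F}(\bar{x})=T_{\mathcal{F}_1}(\bar{x})$; moreover $DH(\bar{x})d=0$ forces $DH_1(\bar{x})d=0$, giving $\Gamma_\mathcal{F}(\bar{x})\subset \Gamma_{\mathcal{F}_1}(\bar{x})$. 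Hence it suffices to prove the Abadie inclusion for $\mathcal{F}_1$, in a setting where $DH_1(\bar{x})\colon E\to F_1$ is surjective and the (IQ) condition holds.

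Fix $d\in\Gamma_{\mathcal{F}_1}(\bar{x})$. I would approximate $d$ by the relative-interior directions $d_\alpha:=d+\alpha\tilde{x}$, with $\tilde{x}$ supplied by (IQ) and $\alpha>0$ small; since $DH_1(\bar{x})\tilde{x}=0$ and the supremum in (IQ) is strictly negative, one obtains $DH_1(\bar{x})d_\alpha=0$ and $h(d_\alpha)\leq h(d)-\alpha\beta\leq -\alpha\beta$ for a uniform $\beta>0$. Closedness of $T_{\mathcal{F}_1}(\bar{x})$ makes it enough to show $d_\alpha\in T_{\mathcal{F}_1}(\bar{x})$ and then let $\alpha\downarrow 0$. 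For a sequence $t_k\downarrow 0$, set $\xi_k:=\bar{x}+t_k d_\alpha$; Fr\'echet differentiability of $H_1$ at $\bar{x}$ together with $DH_1(\bar{x})d_\alpha=0$ yield $\|H_1(\xi_k)\|=o(t_k)$, and Theorem~\ref{theorem:Ljusternik} applied to $H_1$ (which has surjective derivative) produces corrections $r_k$ with $H_1(\xi_k+r_k)=0$ and $\|r_k\|\leq K\|H_1(\xi_k)\|=o(t_k)$. The candidate $x_k:=\xi_k+r_k$ then satisfies $H_1(x_k)=0$ and $(x_k-\bar{x})/t_k\to d_\alpha$. To check the inequalities, pick $\varepsilon_1>0$ with $\sup_{t\in T_{\varepsilon_1}(\bar{x})}\langle Dg_t(\bar{x}),d_\alpha\rangle\leq -\alpha\beta/2$; for $t\in T_{\varepsilon_1}(\bar{x})$ a first-order expansion of $g_t$ at $\bar{x}$ delivers $g_t(x_k)\leq 0$ for all large $k$, while for $t\notin T_{\varepsilon_1}(\bar{x})$ the strict inactivity $g_t(\bar{x})<-\varepsilon_1$ combined with a local Lipschitz estimate closes the case.

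The main obstacle I foresee is the uniformity in $t$ of both the Taylor remainder and the Lipschitz estimates in the last step: the index set $T$ is arbitrary and possibly uncountable, so pointwise differentiability of each $g_t$ is not enough. This is precisely where the uniform continuity of $t\mapsto Dg_t(\bar{x})$ hypothesized in the theorem enters: it supplies a uniform norm bound on $\{Dg_t(\bar{x})\}$ over the indices that matter and, combined with continuity of the derivative in $x$, yields a common modulus controlling the Taylor remainders uniformly across $t$. All other steps---the Rank-theorem reduction, the $d\mapsto d_\alpha$ perturbation, and the Lyusternik correction---are essentially forced by the hypotheses.
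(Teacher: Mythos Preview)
Your approach is correct and runs parallel to the paper's: both arguments first pass to a direction with $h(\cdot)<0$ (you via $d_\alpha=d+\alpha\tilde{x}$, the paper by picking $\bar d\in\operatorname{ri}\Gamma_\mathcal{F}(\bar x)$), then build a curve satisfying the equality constraint via a Lyusternik-type device, verify the inequalities by a first-order expansion controlled uniformly in $t$, and finally recover all of $\Gamma_\mathcal{F}(\bar x)$ by closedness of the tangent cone. The one genuine organizational difference is how the non-surjectivity of $DH(\bar x)$ is handled: you recycle the Rank-Theorem reduction from the proof of Theorem~\ref{theorem:cone_representation} to replace $H$ by $H_1$ with surjective derivative, and then apply the classical Lyusternik Theorem~\ref{theorem:Ljusternik}; the paper instead keeps $H$ and invokes \cite[Proposition~3.4]{MR4104521} (itself a Rank-Theorem consequence) to obtain $T_{\{H=0\}}(\bar x)=\ker DH(\bar x)$ in one stroke. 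Your route is more self-contained within the paper and makes the closure step explicit (the paper leaves it implicit); the paper's route is marginally shorter by outsourcing that step. On the inequality side the paper uses the mean-value theorem, writing $g_t(\bar x+q\bar d+r(q))=g_t(\bar x)+\langle Dg_t(\bar x+\theta_t(\cdots)),\,q\bar d+r(q)\rangle$, which---like your Taylor expansion---really requires equicontinuity of $x\mapsto Dg_t(x)$ at $\bar x$ uniformly in $t$; this is exactly the uniformity obstacle you singled out, and it is the delicate point in both proofs.
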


\begin{proof}   
By (GPMFCQ), $\mbox{ri\,}\Gamma_\mathcal{F}(\bar{x})\neq\emptyset$. Let $\bar{d}\in \mbox{ri\,}\Gamma_\mathcal{F}(\bar{x})$. By (GPMFCQ), there exists a neighbourhood of $\bar{x}$ such that
\begin{equation*}
    \operatorname{im} DH(x) \cap F_2 = \{ 0 \} \quad x \in U(\bar{x}),
\end{equation*}

By (EQ), \eqref{gpmfcq:EQ}, and  \cite[Proposition 3.4]{MR4104521}, the tangent cone to the set
\begin{equation*}
\mathcal{M}:=\{x\in E\ |\ H(x)=0\}\supset {\mathcal{F}}
\end{equation*}
is
\begin{equation*}
\mathcal{T}_{\mathcal{M}}(\bar{x})=\operatorname{ker}DH(\bar{x})=\mbox{aff\,}\Gamma_\mathcal{F}(\bar{x})\quad   (\mbox{in view of assumption \eqref{assumption:aff}}).
\end{equation*}
This means that $\bar{d}\in \Gamma_\mathcal{M}(\bar{x})$ because $\Gamma_\mathcal{F}(\bar{x})\subset \Gamma_\mathcal{M}(\bar{x})$, i.e. there exists $\frac{r(q)}{\|q\|}\rightarrow 0$ such that
\begin{equation*}
\bar{x}+q\bar{d}+r(q)\in \mathcal{M}\ \ \forall \ q\ \text{small enough}
\end{equation*}
Now, we show that
\begin{equation*}
g_{t}(\bar{x}+q\bar{d}+r(q))\le 0\ \ \forall\ q\ \text{small enough}.
\end{equation*}
Observe first that 
\begin{equation*}T_{\varepsilon_{1}}\subset T_{\varepsilon_{2}}\ \forall\  0\le\varepsilon_{1}\le\varepsilon_{2}
\end{equation*} 
Since $\bar{d} \in \operatorname{ri} \Gamma_\mathcal{F}(\bar{x})$ there exist $\varepsilon_0>0$ and $\kappa<0$ such that
\begin{equation*}
   \inf_{\varepsilon\geq \varepsilon_0} \sup_{t\in T_\varepsilon(\bar{x})} \langle Dg_t(\bar{x}) , \bar{d} \rangle < \kappa <0 \quad \text{where}\  T_\varepsilon(\bar{x}):=\{ t \in T \mid g_t(\bar{x})\geq - \varepsilon \}.
\end{equation*}
This means that there exists $\varepsilon_{1}\geq 
\varepsilon_0$ such that
\begin{equation*}
   \sup_{t\in T_\varepsilon(\bar{x})} \langle Dg_t(\bar{x}) , \bar{d} \rangle \le \sup_{t\in T_{\varepsilon_{1}}(\bar{x})} \langle Dg_t(\bar{x}) , \bar{d} \rangle<\kappa<0 \quad \text{for all}\ 
   \varepsilon_0\le\varepsilon\leq \varepsilon_{1}.
\end{equation*}
And consequently, for all $t\in T_{\varepsilon}$
\begin{equation*}
    \langle Dg_t(\bar{x}) , \bar{d} \rangle \le \sup_{t\in T_{\varepsilon_{1}}(\bar{x})} \langle Dg_t(\bar{x}) , \bar{d} \rangle<\kappa<0 \quad \text{for all}\ 
   \varepsilon_0\le\varepsilon\leq \varepsilon_{1}.
\end{equation*}
By the assumption of the uniform continuity of $Dg_{t}(\bar{x})$ with respect to $t\in T$,  there exists a neighbourhood $V$ of zero such that for each $t\in T_{\varepsilon}$, $\varepsilon_0\leq \varepsilon\leq \varepsilon_{1}$ and each $v\in V$
\begin{equation}\label{ineq:kappa}
    \langle Dg_{t}(\bar{x}+v), \bar{d}\rangle<\kappa<0.
\end{equation}

On the other hand, for each $t\in T_{\varepsilon_{1}}$ there exists $\theta_{t}\in(0,1)$ such that for all $0<q<\bar{q}$ small enough
\begin{align*}
g_{t}(\bar{x}+q\bar{d}+r(q)) &=
g_t(\bar{x})+\langle Dg_t(\bar{x}+\theta_t(q\bar{d}+r(q))), q\bar{d}+r(q) \rangle\\
&=g_t(\bar{x})+q [\langle  Dg_t(\bar{x}+\theta_t(q\bar{d}+r(q))), \bar{d}\rangle +  \langle  Dg_t(\bar{x}+\theta_t(q\bar{d}+r(q)), \frac{r(q)}{q} \rangle   ]\\
& < 0 
\end{align*}
since, by \eqref{ineq:kappa},  $Dg_t(\bar{x}+\theta_t(q\bar{d}+r(q)))\bar{d}<\kappa$ and  $Dg_t(\bar{x}+\theta_t(q\bar{d}+r(q))(\frac{r(q)}{q})< - \kappa$ for all $0<q\leq \bar{q}$ .
\end{proof}

Let us define Hurwicz set as follows
\begin{equation*}
	\mathcal{M}(\bar{x},0):= DG(\bar{x})^* \hat{N}_{K}(G(\bar{x})).
\end{equation*}
 
In a more general setting, this set has been defined by Hurwicz in  \cite{MR3204130} and a number of its properties has been shown in \cite{Kurcyusz1976}. We refer to this set as a \textit{Hurwicz set}. In the context of optimality condition Hurwicz set has been already used in \cite{bednarczuk2023constraint,MR4159570}. Since $K=\{0_F\}\times \mathbb{R}^T_-$ is a convex set we have that
\begin{equation*}
    \hat{N}_K(\bar{y})=\{ y^* \in (F\times \mathbb{R}^T)^* \mid \langle y^* , y-\bar{y} \rangle \leq 0\ \text{for all } y\in F\times \mathbb{R}^T\}.  
\end{equation*}
The following proposition provides sufficient conditions to show non-emptiness of Lagrange multipliers set.

\begin{proposition}\label{proposition:lagrange_multipliers}
	Let $E,F$ - Banach spaces. 
	Let $\bar{x}\in \mathcal{F}$ be a local minimizer of \eqref{prob:mainP}.
	Assume $(GPMFCQ)$ holds at $\bar{x}\in\mathcal{F}$ and \eqref{assumption:aff} hold at $\bar{x}$, and $D g_{t}(\bar{x})$ are uniformly continuous with respect to $t\in T$. 
	Assume that the Hurwicz set $\mathcal{M}(\bar{x},0)$ is weakly* closed. Then the set of Lagrange multipliers at $\bar{x}$ is nonempty.
\end{proposition}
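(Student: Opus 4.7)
The plan is to combine Fermat's rule with the Abadie inclusion (Theorem \ref{theorem:Abadie}) and then convert the resulting geometric condition into a multiplier statement via the Hurwicz set, using its weak* closedness exactly where bipolar duality is needed. Concretely, since $\bar{x}$ is a local minimizer of \eqref{prob:mainP} and $f$ is Fr\'echet differentiable at $\bar{x}$, the generalized Fermat rule applied to $f+\delta_{\mathcal{F}}$ gives
\begin{equation*}
-\nabla f(\bar{x}) \in \hat{N}_{\mathcal{F}}(\bar{x}).
\end{equation*}
Because the Fr\'echet normal cone is always contained in the polar of the (Bouligand) tangent cone, and Theorem \ref{theorem:Abadie} provides $\Gamma_{\mathcal{F}}(\bar{x})\subset \mathcal{T}_{\mathcal{F}}(\bar{x})$ under $(GPMFCQ)$ together with \eqref{assumption:aff} and the uniform continuity of $Dg_t(\bar{x})$ in $t$, we obtain
\begin{equation*}
\hat{N}_{\mathcal{F}}(\bar{x}) \subset \mathcal{T}_{\mathcal{F}}(\bar{x})^{\circ} \subset \Gamma_{\mathcal{F}}(\bar{x})^{\circ},
\end{equation*}
so that $-\nabla f(\bar{x})\in\Gamma_{\mathcal{F}}(\bar{x})^{\circ}$.

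Next I would rewrite the linearized cone through the aggregated map $G=(H,(g_t)_{t\in T})$ and the cone $K=\{0_F\}\times\mathbb{R}^T_-$: using $T_K(G(\bar{x}))=\{0_F\}\times\{y\in\mathbb{R}^T : y_t\le 0,\ t\in T(\bar{x})\}$, one has
\begin{equation*}
\Gamma_{\mathcal{F}}(\bar{x}) = \{\, d\in E : DG(\bar{x})d \in T_K(G(\bar{x}))\,\}.
\end{equation*}
The easy inclusion $DG(\bar{x})^{*}\hat{N}_{K}(G(\bar{x}))\subset \Gamma_{\mathcal{F}}(\bar{x})^{\circ}$ is immediate, since for $\lambda\in\hat{N}_K(G(\bar{x}))$ and $d\in\Gamma_{\mathcal{F}}(\bar{x})$, $\langle DG(\bar{x})^{*}\lambda,d\rangle=\langle \lambda,DG(\bar{x})d\rangle\le 0$. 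For the converse I would invoke the bipolar theorem: as $\hat{N}_K(G(\bar{x}))=T_K(G(\bar{x}))^{\circ}$ and $DG(\bar{x})$ is continuous, the standard polar-of-preimage formula gives
\begin{equation*}
\Gamma_{\mathcal{F}}(\bar{x})^{\circ} = \operatorname{cl}^{*}\bigl( DG(\bar{x})^{*}\hat{N}_{K}(G(\bar{x})) \bigr) = \operatorname{cl}^{*}\mathcal{M}(\bar{x},0).
\end{equation*}
The hypothesis that the Hurwicz set $\mathcal{M}(\bar{x},0)$ is weakly* closed collapses the closure, yielding $\Gamma_{\mathcal{F}}(\bar{x})^{\circ}=\mathcal{M}(\bar{x},0)=DG(\bar{x})^{*}\hat{N}_{K}(G(\bar{x}))$.

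Combining the two steps, $-\nabla f(\bar{x})\in DG(\bar{x})^{*}\hat{N}_{K}(G(\bar{x}))$, so there exists $\bar{\lambda}\in\hat{N}_K(G(\bar{x}))$, which by Lemma \ref{lemma:dual} lies in $F^{*}\times\tilde{\mathbb{R}}^{T}$, satisfying $\nabla f(\bar{x})+DG(\bar{x})^{*}\bar{\lambda}=0$; this is exactly the KKT condition, so the Lagrange multiplier set is nonempty. The main obstacle I anticipate is the polar computation $\Gamma_{\mathcal{F}}(\bar{x})^{\circ}=\operatorname{cl}^{*}\mathcal{M}(\bar{x},0)$: one must be careful that $E$ need not be reflexive, that $\mathbb{R}^{T}$ carries the product topology with dual $\tilde{\mathbb{R}}^{T}$, and that the image $DG(\bar{x})^{*}\hat{N}_K(G(\bar{x}))$ need not a priori be closed — this is precisely where the weak*-closedness assumption on the Hurwicz set is consumed, and it should be invoked explicitly rather than being folded into the bipolar step.
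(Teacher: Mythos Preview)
Your proposal is correct and follows essentially the same route as the paper: invoke Theorem \ref{theorem:Abadie} to obtain the Abadie inclusion, pass to polars to land $-\nabla f(\bar{x})$ in $\Gamma_{\mathcal{F}}(\bar{x})^{\circ}$, and then use weak* closedness of the Hurwicz set to identify $\Gamma_{\mathcal{F}}(\bar{x})^{\circ}$ with $DG(\bar{x})^{*}\hat{N}_{K}(G(\bar{x}))$. The paper's own proof is extremely terse --- it establishes Abadie and then defers the duality/bipolar computation to Proposition~5.6 of \cite{MR4159570} --- whereas you have unpacked that external step explicitly; the content is the same.
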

\begin{proof}
By Theorem \ref{theorem:Abadie}, Abadie condition holds at $\bar{x}\in \mathcal{F}$. The rest of the proof follows the lines of the proof of Proposition 5.6 of \cite{MR4159570}. Indeed, $DG(\bar{x})^*\hat{N}_{K}(G(\bar{x}))\subset\{ y\in E^* \mid y=\sum_{i\in C} \lambda_i Dg_i^*(\bar{x}) + DH_1(\bar{x})^*w,\ C\subset T(\bar{x}) \text{ is finite}, \lambda_i\in \mathbb{R}_+,\ w \in F^* \}$.
\end{proof}

The following example illustrates Proposition \ref{proposition:lagrange_multipliers} under nonsurjectivity of $DH(\bar{x})$ and  noncompact, uncountable $T$.

\begin{example}
Let $E=\mathbb{R}^3$, $F=\mathbb{R}^3$. Define
\begin{align*}
    H(x_1,x_2,x_3)=\left[ \begin{array}{c}
         x_1^2  \\
         x_1+x_2 \\
         x_1^3+x_2
    \end{array}\right], \quad g_t(x_1,x_2,x_3)=tx_1^2+x_1-x_2+x_3,\ t\in (0,1)
  \end{align*}
and $\Omega=\{ x \in \mathbb{R}^3 \mid H(x)=0,\ g_t(x)\leq 0,\ t\in(0,1)\}$. Let $\bar{x}=(0,0,0)$. Then
\begin{align*}
        DH(x_1,x_2,x_3)=\left[ \begin{array}{ccc}
         2x_1 & 0 & 0  \\
         1 & 1 & 0 \\
         3x_1^2 & 1 & 0
    \end{array}\right], \quad  & Dg_t(x_1,x_2,x_3)=\left[ \begin{array}{c}
         2tx_1+1  \\
         -1 \\
         1
    \end{array}\right]\ t\in (0,1)\\
     DH(\bar{x})=\left[ \begin{array}{ccc}
         0 & 0 & 0  \\
         1 & 1 & 0 \\
         0 & 1 & 0
    \end{array}\right], \quad  &Dg_t(\bar{x})=\left[ \begin{array}{c}
         1  \\
         -1 \\
        1
    \end{array}\right]\ t\in (0,1)
\end{align*}
We will show that GPMFCQ holds for $\Omega$ at $\bar{x}$. We have
\begin{align*}
    &E_2:=\operatorname{ker} DH(\bar{x}) = \{ (0,0,s)\mid s\in \mathbb{R} \}, \quad F_1:= \operatorname{im} DH(\bar{x}) = \{ (0,s_1+s_2,s_2),\ s_1,s_2\in \mathbb{R} \},\\
    &E_1:=E_2^\perp=\{ (s_1,s_2,0) \mid s_1,s_2\in \mathbb{R} \},\quad F_2:=F_1^\perp= \{ (s,0,0)\mid s\in \mathbb{R} \}.
\end{align*}
Let $(x_1,x_2,x_3)$, $|x_1|<\frac{1}{\sqrt{3}}$, $x_2,x_3\in \mathbb{R}$   We have
\begin{equation*}
    \operatorname{im} DH(x_1,x_2,x_3) \cap F_2 = \{ (2x_1y_1, y_1+y_2, 3x_1^2y_1+y_2),\ y_1,y_2\in \mathbb{R} \} \cap \{ (s,0,0)\mid s\in \mathbb{R} \} = (0,0,0).
\end{equation*}
For any $x=(x_1,x_2,x_3)\in \mathbb{R}^3$,  $(0,x_1+x_2,x_2)=DH(\bar{x})(x)=DH_1(\bar{x})(x)$.
Note that $T_\varepsilon(\bar{x})=(0,1)$ for any $\varepsilon>0$. Let $\tilde{x}=(0,0,-1)$ then
\begin{equation*}
    DH(\bar{x})[-1,0,0]^T=\left[ \begin{array}{ccc}
         0 & 0 & 0  \\
         1 & 1 & 0 \\
         0 & 1 & 0
    \end{array}\right] \left[ \begin{array}{c}
         0  \\
         0 \\
         -1
    \end{array}\right] = \left[ \begin{array}{c}
         0  \\
         0 \\
         0
    \end{array}\right]
\end{equation*}
and 
\begin{equation*}
    \inf_{\varepsilon>0} \sup_{t \in T_\varepsilon(\bar{x})} \langle Dg_t(\bar{x}) ,   \tilde{x} \rangle =\inf_{\varepsilon>0} \sup_{t \in (0,1)} \langle (1,-1,1) , (0,0,-1) \rangle =-1 < 0.
\end{equation*}
Therefore GPMFCQ holds at $\bar{x}\in \Omega$. Therefore the normal cone to $\Omega$ at $\bar{x}$, by Theorem \ref{theorem:cone_representation} is
\begin{align*}
    \hat{N}(\bar{x},\Omega) &= \bigcap_{\varepsilon>0} \operatorname{cl}^* \operatorname{cone}\{ Dg_t(\bar{x}),\ t\in T_\varepsilon(\bar{x})\} + (DH_1(\bar{x}))^*(F_1^*)\\
    &= \operatorname{cone} (1,-1,1) + \{ (\lambda_1,\lambda_2,0), \lambda_1,\lambda_2 \in \mathbb{R} \} = \{ (x_1,x_2,x_3) \mid  x_1,x_2 \in \mathbb{R},\ x_3\geq 0 \}. 
    \end{align*}
    
    We have that the set
    \begin{equation*}
        \operatorname{cone} \{ (Dg_t(\bar{x}, \langle Dg_t(\bar{x}) , \bar{x} \rangle -g_t(\bar{x})  ),\ t\in T \} = \operatorname{cone} \{ (1,-1,1,0) \} = \{ (\lambda,-\lambda,\lambda,0),\ \lambda\geq 0\} 
    \end{equation*}
    is closed, hence NFMCQ holds at $\bar{x}$.
    
    Let us note that $\Omega=\{ (x_1,x_2,x_3) \mid x_1=x_2=0,\ x_3\leq 0 \}$. 
    Now let $f(x)=w(x_1,x_2)-u(x_3)$, where $w:\mathbb{R}^2\rightarrow \mathbb{R}$ is differentiable and $u:\ \mathbb{R}\rightarrow \mathbb{R}$ is differentiable such that $u^\prime(x_3)\geq 0$ for all $x_3\in \mathbb{R}$. Then $(0,0,0)$ is a local minimizer of $f$ on $\Omega$ and
    
\begin{equation*}
    0 \in \left[ \begin{array}{c}
         \frac{\partial w}{\partial x_1}  \\
         \frac{\partial w}{\partial x_2} \\
         -\frac{\partial u}{\partial x_3} 
    \end{array}\right] + \hat{N}(\bar{x},\Omega)= \left[ \begin{array}{c}
         \frac{\partial w}{\partial x_1}  \\
         \frac{\partial w}{\partial x_2} \\
         -\frac{\partial u}{\partial x_3} 
    \end{array}\right] + \{ (x_1,x_2,x_3) \mid  x_1,x_2 \in \mathbb{R},\ x_3\geq 0 \} .
\end{equation*}

\end{example}

The following proposition shows the representation of the normal cone $\hat{N}_{\mathcal{F}}(\bar{x})$ under assumption of GPFCQ and weak*-closedness of Hurwicz set.

\begin{proposition}
    	Let $E,F$ - Banach spaces. Assume $(GPMFCQ)$ holds at $\bar{x}\in\mathcal{F}$ and \eqref{assumption:aff} hold at $\bar{x}$, and $D g_{t}(\bar{x})$ are uniformly continuous with respect to $t\in T$. 
    	Then $\hat{N}_{\mathcal{F}}(\bar{x})=(\Gamma_{\mathcal{F}}(\bar{x}) )^\circ$. Moreover, if the Hurwicz set $\mathcal{M}(\bar{x},0)$ is weakly* closed, then 
    	$$\mathcal{M}(\bar{x},0)=\hat{N}_{\mathcal{F}}(\bar{x})=\bigcap_{\varepsilon>0} \operatorname{cl}^* \operatorname{cone}\{ Dg_t(\bar{x}),\ t\in T_\varepsilon(\bar{x})\} + (DH_1(\bar{x}))^*(F_1^*).$$ 
\end{proposition}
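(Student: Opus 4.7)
The plan is to establish the two equalities in sequence, leaning heavily on the results already proved in the paper, with the bipolar theorem doing the main work for the Hurwicz-set characterization.

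For the first assertion $\hat{N}_{\mathcal{F}}(\bar{x})=\Gamma_{\mathcal{F}}(\bar{x})^{\circ}$, I would first invoke Theorem \ref{theorem:Abadie}: under (GPMFCQ) together with \eqref{assumption:aff} and uniform continuity of $Dg_{t}(\bar{x})$ in $t$, the Abadie inclusion $\Gamma_{\mathcal{F}}(\bar{x})\subset\mathcal{T}_{\mathcal{F}}(\bar{x})$ holds. The reverse inclusion $\mathcal{T}_{\mathcal{F}}(\bar{x})\subset\Gamma_{\mathcal{F}}(\bar{x})$ is routine from Fréchet differentiability of $H$ and $g_{t}$. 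Since bounded linear operators are weak-to-weak continuous, the same Taylor argument applied to a weakly convergent difference quotient yields $\mathcal{T}^{w}_{\mathcal{F}}(\bar{x})\subset\Gamma_{\mathcal{F}}(\bar{x})$, and trivially $\mathcal{T}_{\mathcal{F}}(\bar{x})\subset\mathcal{T}^{w}_{\mathcal{F}}(\bar{x})$. Combining these inclusions gives $\mathcal{T}^{w}_{\mathcal{F}}(\bar{x})=\Gamma_{\mathcal{F}}(\bar{x})$, and the duality stated in Corollary 1.11 of \cite{MR3070104} then yields $\hat{N}_{\mathcal{F}}(\bar{x})=(\mathcal{T}^{w}_{\mathcal{F}}(\bar{x}))^{\circ}=\Gamma_{\mathcal{F}}(\bar{x})^{\circ}$.

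For the second assertion, the core step is to compute the polar of $\mathcal{M}(\bar{x},0)=DG(\bar{x})^{*}\hat{N}_{K}(G(\bar{x}))$ directly. Using $K=\{0_{F}\}\times\mathbb{R}^{T}_{-}$ and the explicit description of $\hat{N}_{K}(G(\bar{x}))$ as pairs $(w,\lambda)$ with $w\in F^{*}$ arbitrary and $\lambda\in\tilde{\mathbb{R}}^{T}_{+}$ supported on $T(\bar{x})$, together with the identity $DH(\bar{x})^{*}(F^{*})=DH_{1}(\bar{x})^{*}(F_{1}^{*})$ (since $\operatorname{im}DH(\bar{x})=F_{1}$), one checks that $d\in\mathcal{M}(\bar{x},0)^{\circ}$ is equivalent to $DH_{1}(\bar{x})d=0$ together with $\langle Dg_{t}(\bar{x}),d\rangle\le 0$ for all $t\in T(\bar{x})$, i.e. $\mathcal{M}(\bar{x},0)^{\circ}=\Gamma_{\mathcal{F}}(\bar{x})$.

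Now I would apply the bipolar theorem: since $\mathcal{M}(\bar{x},0)$ is a convex cone containing the origin, $\mathcal{M}(\bar{x},0)^{\circ\circ}=\operatorname{cl}^{*}\mathcal{M}(\bar{x},0)$; the weak* closedness hypothesis forces $\mathcal{M}(\bar{x},0)^{\circ\circ}=\mathcal{M}(\bar{x},0)$. Combined with the polar computation and the first assertion, this gives
\begin{equation*}
\mathcal{M}(\bar{x},0)=\Gamma_{\mathcal{F}}(\bar{x})^{\circ}=\hat{N}_{\mathcal{F}}(\bar{x}),
\end{equation*}
and the remaining equality to $\bigcap_{\varepsilon>0}\operatorname{cl}^{*}\operatorname{cone}\{Dg_{t}(\bar{x}),t\in T_{\varepsilon}(\bar{x})\}+(DH_{1}(\bar{x}))^{*}(F_{1}^{*})$ is supplied by Theorem \ref{theorem:cone_representation}.

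The step I expect to be most delicate is the direct verification $\mathcal{M}(\bar{x},0)^{\circ}=\Gamma_{\mathcal{F}}(\bar{x})$: one inclusion is straightforward (test against finite positive combinations), but the other requires isolating individual components of $\lambda\in\hat{N}_{K}(G(\bar{x}))$ by varying one index at a time while using that the $F$-component of $\hat{N}_{K}$ is all of $F^{*}$, and then exchanging the role of $DH(\bar{x})$ and $DH_{1}(\bar{x})$ carefully via the adjoint identity above. The bipolar step is then routine, provided the weak* closedness hypothesis is invoked explicitly.
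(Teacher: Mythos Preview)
Your argument is correct and follows essentially the same route as the paper: invoke Theorem~\ref{theorem:Abadie} for Abadie, pass to polars via the duality $\hat{N}_{\mathcal{F}}(\bar{x})=(\mathcal{T}^{w}_{\mathcal{F}}(\bar{x}))^{\circ}$, identify $\mathcal{M}(\bar{x},0)$ with $\Gamma_{\mathcal{F}}(\bar{x})^{\circ}$ under weak* closedness, and finish with Theorem~\ref{theorem:cone_representation}. Your version is in fact more explicit than the paper's---you spell out the inclusion $\mathcal{T}^{w}_{\mathcal{F}}(\bar{x})\subset\Gamma_{\mathcal{F}}(\bar{x})$ and the bipolar/polar computation $\mathcal{M}(\bar{x},0)^{\circ}=\Gamma_{\mathcal{F}}(\bar{x})$, whereas the paper simply asserts $\mathcal{M}(\bar{x},0)=(\Gamma_{\mathcal{F}}(\bar{x}))^{\circ}$ without justification; your adjoint identity $DH(\bar{x})^{*}(F^{*})=DH_{1}(\bar{x})^{*}(F_{1}^{*})$ is exactly the bridge needed there.
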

\begin{proof}
        By Theorem \ref{theorem:Abadie} we have that Abadie condition holds at $\bar{x}$, hence $(\Gamma_{\mathcal{F}}(\bar{x}) )^\circ= ((\mathcal{T}_{\mathcal{F}}^w(\bar{x}))^\circ=\hat{N}_{\mathcal{F}}(\bar{x})$. Moreover if Hurwicz set  $\mathcal{M}(\bar{x},0)$ is weakly* closed, then we have $DG(\bar{x})^*\hat{N}_{K}(G(\bar{x}))=\mathcal{M}(\bar{x},0)=(\Gamma_{\mathcal{F}}(\bar{x}) )^\circ=\hat{N}_{\mathcal{F}}(\bar{x})$ and by Theorem \ref{theorem:cone_representation},   $$\hat{N}_{\mathcal{F}}(\bar{x})=\bigcap_{\varepsilon>0} \operatorname{cl}^* \operatorname{cone}\{ Dg_t(\bar{x}),\ t\in T_\varepsilon(\bar{x})\} + (DH_1(\bar{x}))^*(F_1^*)=DG(\bar{x})^*\hat{N}_{K}(G(\bar{x})).$$

\end{proof}
\begin{remark}
Let us note that if NFMCQ hold at $\bar{x}$, then 
$$\bigcap_{\varepsilon>0} \operatorname{cl}^* \operatorname{cone}\{ Dg_t(\bar{x}),\ t\in T_\varepsilon(\bar{x})\} + (DH_1(\bar{x}))^*(F_1^*)= \operatorname{cone}\{ Dg_t(\bar{x}),\ t\in T(\bar{x})\} + (DH_1(\bar{x}))^*(F_1^*).$$
\end{remark}

Some relationships between the condition NFMCQ and weak*-closedness of Hurwicz set were investigated in \cite{bednarczuk2023constraint}.

\section{Examples}

In this section, we explore a selection of examples that aim to illustrate the theoretical concepts introduced earlier. These examples highlight how the Generalized Perturbed Mangasarian-Fromovitz Constraint Qualification (GPMFCQ) can be applied in different settings, from cases involving surjective derivatives $DH(\bar{x})$ to those with more complex, non-surjective conditions. Let $\bar{x} \in \mathcal{F}$.

\subsection{Surjectivity of derivative of equality constraints, inactivity of inequality}

Let $E$ - Banach space, $F=\mathbb{R}^n$. Suppose that $DH(\bar{x}):\ E\rightarrow F$ is onto, $E_2:=\ker DH(\bar{x})$ is closed, $E=E_1\oplus E_2$, $E_2$ - closed and $g_t(\bar{x})<0$, $t\in T$, where $T$ is finite.
In this case condition GPMFCQ is already satisfied, since (EQ) hold  with $F_1=F$ and (IQ) is satisfied\footnote{Note that the set $T_\varepsilon(\bar{x})$ is empty for $\varepsilon>0$ small enough.}.

Let
\begin{equation*}
    M:= \{ x \in E \mid H(x)=0\}
\end{equation*}
Then, $\Gamma_{\mathcal{F}}(\bar{x})=\Gamma_M(\bar{x})\subset \operatorname{ker} DH(\bar{x})$ and by Lyusternik, $ \operatorname{ker} DH(\bar{x})=T_{M}(\bar{x})$. Therefore for any $d \in \Gamma_{\mathcal{F}}(\bar{x})\subset T_{M}(\bar{x})$,
\begin{equation*}
    \exists \varepsilon_0>0\ \exists r(s):\ [0,\varepsilon_0) \rightarrow E\quad  \lim_{s\downarrow 0} \frac{\|r(s)\|}{s} =0 \quad H(\bar{x}+sd+r(s))=0.   
\end{equation*}
By assumption, $g_t(\bar{x})<0  $ for all $t\in T$ , we have 
\begin{align}\tag{P1}\label{assumption:P1}
    \forall r(s) \ \lim_{s\downarrow 0} \frac{\|r(s)\|}{s} =0\ \forall  d \in \Gamma_{\mathcal{F}}(\bar{x})\ \exists \varepsilon_1>0\  \forall s \in[0,\varepsilon_1) \quad g_t(\bar{x}+sd+r(s)) \leq 0 .
\end{align}
Then there exists $\varepsilon_2>0$ such that 
\begin{equation*}
 H(\bar{x}+sd+r(s))=0  \wedge \forall t\in T\ g_t(\bar{x}+sd+r(s))\leq 0, 
\end{equation*}
i.e. $d \in T_\mathcal{F}(\bar{x})$.

\subsection{Non-Surjectivity of the derivative of equality constraints, inactivity of inequality}
Assume that $F=F_1\bigoplus F_2$, where  $F_1=\operatorname{Im} DH(\bar{x})(E) $ - closed, $F_2$ - closed subspace of $F$, $E=E_1\oplus E_2$, $E_2$ - closed and $g_t(\bar{x})<0$, $t\in T$, where $T$ is finite. 
Suppose that
\begin{equation*}
    \operatorname{Im} DH(x) \cap F_2 = \{0\}
\end{equation*}
for $x\in U(\bar{x})$. Then GPMFCQ holds. 
Let
\begin{equation*}
    M:= \{ x \in E \mid H(x)=0\}.
\end{equation*}
Then, $\Gamma_{\mathcal{F}}(\bar{x})=\Gamma_M(\bar{x})\subset \operatorname{ker} DH(\bar{x})$ and by \cite[Proposition 3.4]{MR4104521}, $ \operatorname{ker} DH(\bar{x})=T_{M}(\bar{x})$.
Therefore for any $d \in \Gamma_{\mathcal{F}}(\bar{x})\subset T_{M}(\bar{x})$,
\begin{equation*}
    \exists \varepsilon_0>0\ \exists r(s):\ [0,\varepsilon_0) \rightarrow E\quad  \lim_{s\downarrow 0} \frac{\|r(s)\|}{t} =0 \quad H(\bar{x}+sd+r(s))=0.   
\end{equation*}
The condition \eqref{assumption:P1} hold. 
Then there exists $\varepsilon_2>0$ such that 
\begin{equation*}
 H(\bar{x}+sd+r(s))=0  \wedge \forall t\in T\ g_t(\bar{x}+sd+r(s))\leq 0, 
\end{equation*}
i.e. $d \in T_\mathcal{F}(\bar{x})$.

\subsection{Finite dimensional case with finite number of inequalities}
In this subsection we relate our investigations in case of $E$ finite dimensional case with finite number of inequalities $g_t$. The set $\mathcal{F}$ has the following form
\begin{equation*}
    \mathcal{F}= \{ x\in \mathbb{R}^n \mid H(x)=(H_1(x),\dots, H_k(x))=0,\ g_1(x)\leq 0,\dots , g_m(x)\leq 0 \}
\end{equation*}
where $H:\ \mathbb{R}^n \rightarrow \mathbb{R}^k$ is $C^1$ operator and $g_i:\ \mathbb{R}^n\rightarrow \mathbb{R}$, $i=1,\dots,m$ are $C^1$ functions. The condition (EQ) in this case is equivalent to the constant rank condition, i.e.
\begin{equation*}
    \exists U(\bar{x})\ \forall x \in U(\bar{x})\  \operatorname{rank} \{DH_1(x),\dots, DH_k(x) \} = \operatorname{rank} \{DH_1(\bar{x}),\dots, DH_k(\bar{x}) \}
\end{equation*}
If derivative $DH(\bar{x})$ is onto, then GPMFCQ is equivalent to MFCQ. Otherwise, GPFCQ condition is equivalent to the CRMFCQ, see \cite[Definition 5]{Kruger2014}. In this case Abadie condition holds (see \cite{Kruger2014}).

The following example illustrates the (EQ) condition in the above settings.

\begin{example}

Let \begin{equation*}H(x_1,x_2)=\left[ \begin{array}{c}
     x_1^2  \\
     x_2\\
     x_1^3+x_2
    \end{array}\right]\end{equation*}
be an operator $H:\ \mathbb{R}^2 \rightarrow \mathbb{R}^3$. Then
\begin{equation*}
    DH(x_1,x_2)=\left[ \begin{array}{cc}
     2x_1 & 0  \\
     0 & 1 \\
     3x_1^2 & 1
    \end{array}\right]
\end{equation*}
and
\begin{equation*}
    \operatorname{im} DH(x_1,x_2) = \{ (2x_1s_1,s_2,3x_1^2s_1+s_2),\ s_1,s_2 \in \mathbb{R}_+ \}
\end{equation*}
Let $\bar{x}=(1,0)$, we put $F_1=\operatorname{im} DH(1,0)=\{ (2s_1,s_2,3s_1+s_2),\ s_1,s_2\in \mathbb{R}\}=\operatorname{span} \{ (2,0,3), (0,1,1) \}$ and $y_2\in F_2 = \{ (\frac{3}{2}s,s,-s),\ s\in \mathbb{R}\}=\operatorname{span} \{ (\frac{3}{2},1,-1)\}$. There exists a neighbourhood  $U(\bar{x})$ such that 
\begin{equation*}
     \operatorname{im} DH(x_1,x_2) \cap F_2 = (0,0,0).
\end{equation*}
Since $\operatorname{rank} DH(1,0)$ is $2$ we can identify $F_1$ with $\mathbb{R}^2$ and $F_2$ with $\mathbb{R}$ and we write $y=(y_1,y_2)$, where $y_1\in F_1$, and $y_2\in F_2$. By rank theorem the respective $\varphi$ and $\psi$ exists.

In this case $E_2=(0,0)$, $E_1=\mathbb{R}^2$. 
Let us define $\psi(x,y)=(\sqrt{x},y)$, $x>0$, $y\in \mathbb{R}$. Then $H(\psi(x_1,x_2))=H(\sqrt{x_1},x_2)=(x_1,x_2,\sqrt{x_1}^3+x_2)$, $x_1>0$, $x_2\in \mathbb{R}$. Now define $\varphi(x,y,z)=(x,y,z-x^\frac{3}{2}-y)$ then $\varphi(H(\psi(x_1,x_2)))=(x_1,x_2,0)$. 

\end{example}

\section{Conclusions}

In this work, we have developed a framework for addressing optimization problems in infinite-dimensional spaces, focusing on scenarios that involve both equality and inequality constraints. In this approach we study the case, when the derivative of equality constraints is non necessarily surjective and the inequality constraints are indexed by set $T$, which is arbitrary. Important in this approach is the usage of the product topology in space $\mathbb{R}^T$ and its dual $\tilde{\mathbb{R}}^T$ (see Lemma \ref{lemma:dual}).

By introducing the Generalized Perturbed Mangasarian-Fromovitz Constraint Qualification (GPMFCQ), we extended classical constraint qualification conditions to accommodate cases where the derivative of the mapping defining equality constraints is not necessarily surjective. This generalization has proven to be particularly useful in situations where traditional assumptions do not hold, providing a new approach to deriving optimality conditions and ensuring the existence of Lagrange multipliers.

The theoretical results were supported by a series of examples, which illustrated the applicability of the GPMFCQ in various settings. These examples demonstrated the versatility of our approach and its potential to address challenges inherent in infinite-dimensional optimization problems, such as those encountered in control theory and partial differential equations.

\bibliographystyle{plain}
\bibliography{myarticlebibfile}
\end{document}